\newcommand\vv{z}
\newcommand\eps{\varepsilon}
\renewcommand\phi{\varphi}
\newcommand\dist{\mathrm{dist}}
\newcommand\R{\mathbb{R}}
\newcommand\N{\mathbb{N}}
\renewcommand\H{\mathcal{H}} 
\newcommand\Bc{\overline{B}} 
\newcommand\B{\overline{B}(0,1)} 
\newcommand\1{\mathbbm{1}}
\newcommand\Lip{\mathrm{Lip}}
\newcommand\Lips{\Lip} 
\newcommand\spt{\mathrm{spt}} 
\newcommand\NN{\mathcal{N}}
\newcommand{\spr}[2]{\langle#1,#2\rangle}
\newcommand\taux{\vartheta}
\newtheorem*{thm*}{Theorem}
\newtheorem{thm}{Theorem}[section]
\newtheorem{lem}[thm]{Lemma}
\newtheorem{cor}[thm]{Corollary}
\newtheorem{exm}[thm]{Example}
\theoremstyle{definition}
\newtheorem*{dfn*}{Definition}
\newtheorem{dfn}[thm]{Definition}
\newtheorem{rmk}[thm]{Remark}
\numberwithin{equation}{section}
\title{Cone unrectifiable sets and non-differentiability of
Lipschitz functions\thanks{The research leading to these results has received funding from the European Research Council / ERC Grant Agreement n.~291497. The first named author also acknowledges the support of the EPSRC grant EP/N027531/1 and of the 
National Science Foundation under Grant No. DMS-1440140 while the author was in residence at the Mathematical Sciences Research Institute in Berkeley, California, during the Fall 2017 semester.}}
\author{Olga Maleva and David Preiss}
\date{}
\begin{document}

\maketitle

\begin{abstract}
\noindent
We provide sufficient conditions for a set
$E\subset\R^n$ to be a non-universal differentiability set,
i.e.\ to be contained in the set of points of non-differentiability
of a real-valued Lipschitz function.
These conditions are motivated by
a description of the ideal generated by sets of
non-differentiability of Lipschitz self-maps of $\R^n$
given by Alberti, Cs\"ornyei and Preiss, which
eventually led to the result of Jones and Cs\"ornyei
that for every Lebesgue null set $E$ in $\R^n$
there is a Lipschitz map $f:\R^n\to\R^n$
not differentiable at any point of $E$, even though for $n>1$ and
for Lipschitz functions from $\R^n$ to $\R$ there exist
Lebesgue null universal differentiability sets.
\end{abstract}
\section{Introduction and main results}
A recent surge of interest in validity of Rademacher's theorem on
almost everywhere differentiability of
Lipschitz maps of $\R^n$ to $\R^m$ arose from several
new results and approaches. For infinite-dimensional Banach spaces
there were successful attempts to obtain its analogues
for the notion of G\^ateaux derivative,
for results and references see \cite[Chapter 6]{BL},
and
some results
for the stronger notion of Fr\'echet derivative
to which a recent monograph \cite{LPT} is devoted. In another direction,
Pansu~\cite{Pansu} obtained an almost everywhere result
for Lipschitz maps between Carnot groups, and
Cheeger~\cite{Cheeger} generalised Rademacher's theorem to Lipschitz
functions on metric measure spaces.

Here we contribute to this research in the direction
started by a result of \cite{p} that, in terms of the size of
differentiability sets
of real-valued Lipschitz functions on $\R^2$,
Rademacher's theorem is not sharp: there is a Lebesgue null
set in $\R^2$ containing points of differentiability of
any real-valued Lipschitz function on $\R^2$. Following \cite{MDo,MD},
where it was shown how unexpectedly small such sets may be, they
are now called universal differentiability sets. The
analogues of universal differentiability sets
were recently introduced and investigated
in the Heisenberg group~\cite{PS}.

The non-differentiability sets of Lipschitz maps $\R^n\to\R^m$,
$m\ge n$ were first completely described
in geometric measure theoretical terms in \cite{ACP}
(see \cite{ACP0,ACP1} for a published less formal
description), and then \cite{CJ} showed that this description gives
precisely the Lebesgue null sets in $\R^n$.
Hence Rademacher's theorem is sharp for maps into spaces of
the same or higher dimension. This result was complemented in \cite{pSp}
where it was proved that whenever $m<n$, there is a Lebesgue null
set in $\R^n$ containing points of differentiability of
any Lipschitz map $\R^n\to\R^m$.
We will return to the description originally introduced in \cite{ACP} later
as it forms the main starting point of what we do in the present paper.

The problem we address in this paper stems
from the above results: can one give a
geometric measure theoretical description of
non-differentiability sets of Lipschitz maps of
$\R^n$ to $\R$?
Notice that this is a question about sets and not
about measures: if we are given a $\sigma$-finite Borel
measure $\mu$ in $\R^n$ that is singular with respect to the
Lebesgue measure, we may use \cite{ACP} and \cite{CJ}
to find a Lipschitz $\mu$-almost everywhere non-differentiable
mapping $f=(f_1,\dots,f_n):\R^n\to\R^n$ and observe that for a random
choice of $\alpha_i\in(0,1)$ the real-valued function $\sum_{i=1}^n \alpha_i f_i$
is Lipschitz and $\mu$-almost everywhere non-differentiable.
This argument appears both in
\cite{ACP} and \cite{AM}, and moreover \cite{AM}
simplifies the general arguments
of \cite{ACP} in the special case of differentiability
$\mu$-almost everywhere; notice also that in this case even
the results of \cite{CJ} may be demonstrated by a more
accessible proof given in \cite{DR}
(which is based on different ideas).

We will now state our results and explain them in more detail. Their
proofs will be given in Section~\ref{proofs}. The short Section~\ref{examples}
contains two examples whose meaning will also be discussed here.

The main concept that we use is based on the notion of \emph{width}
that has been, together with several variants, introduced in \cite{ACP}.

\begin{dfn}\label{DF1}
Suppose $e\in\R^n\setminus\{0\}$ and $\alpha\in(0,1]$.
We let $C_{e,\alpha}$ be the cone
$\{x\in\R^n: \spr{x}{e}\ge\alpha\|x\|\|e\|\}$
and $\Gamma_{e,\alpha}$ the set of Lipschitz
curves such that $\gamma'(t)\in C_{e,\alpha}$ for almost every~$t\in\R$.
The \emph{$(e,\alpha)$-width} of an open set $G\subset \R^n$ is defined by
\begin{align}\label{D1}
w_{e,\alpha}(G) &
= \sup\{\H^1(G\cap\gamma(-\infty,\infty)): \gamma\in\Gamma_{e,\alpha}\},\\
\shortintertext{and of any $E\subset\R^n$ by}\label{D2}
w_{e,\alpha}(E) &= \inf\{w_{e,\alpha}(G): G\supset E, G\text{ is open}\}.
\end{align}
For the sake of completeness, when $e=0$ or $\alpha > 1$
we let $w_{e,\alpha}(E)=0$ for every $E\subset\R^n$.
Of course, these cases have no bearing on what we do.
\end{dfn}

Notice that, as \cite{ACP} points out, for constructions of Lipschitz
functions, where values of
$w_{e,\alpha}$ matter only for arbitrarily small $\alpha$,
the number $\alpha$ in Definition~\ref{DF1} may be replaced
by any quantity or function that may attain arbitrarily small positive values.
For example \cite{AM} replaces it by $\cos\alpha$, which is a bound
on the angle between $\gamma'(t)$ and $e$ and so is geometrically
natural, but
for us has the disadvantage that values of $\alpha$
that matter, namely those for which
the cone $C_{e,\alpha}$ is close to a half-space,
are close to $\pi/2$ rather than to zero.

Many variants of Definition~\ref{DF1} that are easily seen or shown
to be equivalent to the one given here may be found in
\cite[Definition~1.1 and Remark~1.2]{pi}.
A rather useful variant is that $\Gamma_{e,\alpha}$
may be defined as the collection of
$\gamma\in C^1(\R,\R^n)$ satisfying $\|\gamma'(t))\|=1$
and $\gamma'(t)\in C_{e,\alpha}$ for every~$t$.

Perhaps the most interesting
modification of Definition~\ref{DF1} comes from
a so far unpublished result of M\'ath\'e and allows
taking Borel sets~$G$ both in \eqref{D1} and \eqref{D2}.
It is not exactly equivalent with ours, but has the properties that
a set of $(e,\beta)$ width zero according to M\'ath\'e has $(e,\alpha)$
width zero according to the above definition for every $\alpha>\beta$,
and a set of $(e,\alpha)$
width zero according to the above definition
has $(e,\alpha)$ width zero according to M\'ath\'e.
We have not used this, since our constructions, like that of \cite{ACP},
use that width is determined by open sets, and so the only difference
would be that an appropriate
version of Definition~\ref{DF1} would be called M\'ath\'e's Theorem.

Terms like ``cone null'' have been used for sets
that are defined with the help of the notion of width. We follow this
trend in our main notion, introduced in Definition~\ref{def-cu}.
Before coming to it, we recall the main starting motivation
behind what we do,
namely the following definition from \cite{ACP1}
and a special case of
their result (proved in \cite{ACP}) which is most relevant for us.

\begin{dfn}[see \mbox{\cite[Definition 1.11]{ACP1}}]\label{ACP-D}
A map $\tau$ of a subset $E$ of $\R^n$ to the Grassmanian $G(k,n)$
is said to be a
\emph{$k$-dimensional tangent field} of $E$ if
\begin{equation}\label{ACP-DE}
w_{e,\alpha}\{x\in E: \tau(x)\cap C_{e,\alpha} =\{0\}\} =0
\text{ for every $e\in\R^n$ and $\alpha>0$.}
\end{equation}
\end{dfn}

Obviously, if $E$ is a $k$-dimensional embedded $C^1$ submanifold of $\R^n$,
its tangent field $\tau(x)$ satisfies \eqref{ACP-DE}. However,
the following theorem proved in \cite{ACP1,ACP}
shows that many non-smooth sets admit a $k$-dimensional tangent field.
Before stating it, we notice that Definition~\ref{ACP-D}
uses the value $\alpha$ in two different
meanings and so it is sensitive on the choice of the notion of width.
As a more detailed discussion of this will appear in \cite{ACP},
we just point out that the use of M\'ath\'e's width and
the width from Definition~\ref{DF1} are equivalent. The only case to treat
is when Definition~\ref{DF1} holds in M\'ath\'e's sense.
Assuming $w_{e,\alpha_k}\{x\in E: \tau(x)\cap C_{e,\alpha_k} =\{0\}\} =0$ in M\'ath\'e's sense for all $k\ge1$, where $0<\alpha_k<\alpha<1$ and $\alpha_k\to\alpha$, we conclude that in the sense of Definition~\ref{DF1} we have
$w_{e,\alpha}\{x\in E: \tau(x)\cap C_{e,\alpha_k} =\{0\}\} =0$ for all $k\ge1$. Hence writing
$\{x\in E: \tau(x)\cap C_{e,\alpha} =\{0\}\}
=\bigcup_{k=1}^\infty \{x\in E: \tau(x)\cap C_{e,\alpha_k} =\{0\}\}$,
we get
$w_{e,\alpha}\{x\in E: \tau(x)\cap C_{e,\alpha} =\{0\}\} =0$.

\begin{thm}[see \mbox{\cite[Theorem 1.12]{ACP1}}]\label{ACP-T}
A set $E\subset\R^n$ is contained in a non-differentiability set of a Lipschitz map
$\R^n\to\R^m$ for some, or any,
$m\ge n$ if and only if it admits an $(n-1)$-dimensional
tangent field. If $n=2$, this holds if and only of $E$ is Lebesgue null.
\end{thm}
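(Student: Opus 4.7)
The theorem contains three separate implications, each requiring a distinct technique.

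For the sufficiency direction (an $(n-1)$-dimensional tangent field on $E$ implies $E$ lies in the non-differentiability set of some Lipschitz $f:\R^n\to\R^m$ for any $m\ge n$), it suffices to treat $m=n$, as larger $m$ follow by padding with zero coordinates. The plan is to decompose $E=\bigsqcup_i E_i$ into countably many measurable pieces on which $\tau$ is close to a fixed $V_i\in G(n-1,n)$, then assemble $f$ as a countable sum of Lipschitz bumps $f_i$ supported in thin tubular neighborhoods of the $E_i$, each oscillating in the direction $e_i$ normal to $V_i$. At every point of $E_i$ the bump $f_i$ will fail to be differentiable while the remaining $f_j$ contribute smooth terms there, so $f$ is non-differentiable on all of $E$. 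The tangent field property applied with $e=e_i$ and $\alpha$ small gives $w_{e_i,\alpha}(E_i)=0$, so each curve $\gamma\in\Gamma_{e_i,\alpha}$ meets the active region of $f_i$ in a set of arbitrarily small $\H^1$-measure; this keeps the Lipschitz constant of $f$ finite on every such curve, and hence on $\R^n$. The main obstacle will be arranging the amplitudes, wavelengths and tube widths so that the series is simultaneously summable, Lipschitz-compatible, and produces genuine non-differentiability at every point of $E$.

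For the necessity direction (a Lipschitz $f:\R^n\to\R^m$ with $m\ge n$ non-differentiable at every point of $E$ yields an $(n-1)$-dimensional tangent field), I would, at each $x\in E$, take $\tau(x)$ to be a measurable selection of an $(n-1)$-dimensional subspace lying inside the set of directions along which $f$ has a directional derivative at $x$. The condition~\eqref{ACP-DE} then reduces to Rademacher's theorem applied to each composition $f\circ\gamma:\R\to\R^m$ with $\gamma\in\Gamma_{e,\alpha}$: such compositions are differentiable almost everywhere, so the set of $x\in E$ where no direction of $C_{e,\alpha}$ lies in $\tau(x)$ meets $\gamma$ in an $\H^1$-null set and thus has $(e,\alpha)$-width zero. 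The subtle step, which I expect to be the main obstacle, is guaranteeing that $\dim\tau(x)\ge n-1$ at every non-differentiable point: if $f$ admitted $n$ linearly independent directional derivatives at such $x$, a rank argument exploiting $m\ge n$ should force $f$ to be genuinely differentiable at $x$, contradicting the assumption. This rank argument is precisely what fails when $m<n$, consistent with the null universal differentiability sets of \cite{pSp}.

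For the $n=2$ specialization, if $E\subset\R^2$ is Lebesgue null then \cite{CJ} supplies a Lipschitz $\R^2\to\R^2$ non-differentiable at every point of $E$, and the necessity direction yields a $1$-dimensional tangent field; conversely, a $1$-dimensional tangent field on $E$ gives via the sufficiency direction a Lipschitz $\R^2\to\R^2$ non-differentiable on $E$, and Rademacher's theorem forces $E$ to be Lebesgue null.
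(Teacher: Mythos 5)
The statement you set out to prove is Theorem~\ref{ACP-T}, which the present paper does \emph{not} prove: it is quoted from Alberti, Cs\"ornyei and Preiss (see \cite[Theorem~1.12]{ACP1}) with the proof deferred to~\cite{ACP}. There is therefore no internal proof to compare against; the authors use this theorem only as motivation for Definitions~\ref{ACP-D} and~\ref{NV}. Assessing your sketch on its own terms, the necessity direction contains a gap that your proposed rank argument does not close, and a second gap in how you invoke Rademacher's theorem along curves.

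You propose to take $\tau(x)$ to be an $(n-1)$-dimensional subspace of directions along which $f$ has a directional derivative at $x$, guaranteeing $\dim\tau(x)\ge n-1$ by the claim that if a Lipschitz $f:\R^n\to\R^m$ with $m\ge n$ had $n$ linearly independent directional derivatives at $x$, this would force differentiability at $x$. That implication is false. Let $g:\R^n\to\R$ be Lipschitz, possess all directional derivatives at $0$, and be non-differentiable at $0$ (for $n=2$ take $g(r\cos\theta,r\sin\theta)=r\sin3\theta$), and set $f(x):=(g(x),x)\in\R^{n+1}$, so $m=n+1\ge n$. Then $f$ is Lipschitz, has directional derivatives at $0$ in every direction, the vectors $D_uf(0)=(D_ug(0),u)$ for $n$ independent $u$ are themselves independent, yet $f$ is not differentiable at~$0$. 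So there is no a~priori reason for the set of directions of directional differentiability of $f$ at a non-differentiability point to contain an $(n-1)$-dimensional subspace, and a rank consideration cannot produce one. Separately, even if such a subspace $\tau(x)$ were chosen, verifying~\eqref{ACP-DE} requires that for every $\gamma\in\Gamma_{e,\alpha}$ and $\H^1$-a.e.\ $t$ with $\gamma(t)\in E$ the \emph{specific} vector $\gamma'(t)$ lies in $\tau(\gamma(t))$; differentiability of $f\circ\gamma$ at $t$ only tells you that $\gamma'(t)$ is a direction of differentiability of $f$ at $\gamma(t)$, not that it lies in the particular $(n-1)$-dimensional subspace you selected. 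Your sufficiency sketch and the $n=2$ specialisation are structurally plausible, but since the paper defers this whole theorem to \cite{ACP}, there is nothing here against which to check the quantitative choices you leave open.
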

As already mentioned, in the last assertion of Theorem~\ref{ACP-T} the assumption
$n=2$ was removed in \cite{CJ}. Notice also that the general case of
Theorem~\ref{ACP-T} says that the existence of $k$-dimensional tangent fields
is similarly related to the existence of functions that at every point of the set
can be differentiable in the direction of linear subspaces of dimension at most $k$ only.

Based on these results, we conjecture that sets of
non-differentiability of real-valued Lipschitz functions may be described
as those for which there is an $(n-1)$-dimensional
tangent field satisfying conditions that make it
in some sense closer to being ``genuinely'' $(n-1)$-dimensional.
We do not have a more precise conjecture, but a simple consequence of our
main result, Theorem~\ref{4}, is that sets for which there exists a continuous
$(n-1)$-dimensional tangent field are indeed sets of
non-differentiability of real-valued Lipschitz functions.

Since for the real-valued case only the tangent fields of codimension one
are relevant, we base our approach on an obvious variant of
Definition~\ref{ACP-D} that uses the normal fields instead of tangent fields.
More interestingly, having in mind conditions similar to continuity
of the normal field, we can define the normal vectors pointwise, while
in general no
pointwise definition of tangent fields from Definition~\ref{ACP-D} is known.
A highly interesting exception to this is
the special case when we are interested in measures
supported by a set admitting a $k$-dimensional tangent field where \cite{AM}
provides an interesting pointwise definition of the tangent field at almost every point.

Since our ``normal vectors'' are not exactly those orthogonal to
the tangent field from Definition~\ref{ACP-D}, we do not actually call them
``normal vectors'' and instead use just notation $\NN(E,x)$ for their collection.

\begin{dfn}\label{NV}
For $x\in E\subset\R^n$ let
\[\NN(E,x):=\{e \in\R^n: (\forall \eps>0)(\exists r>0) w_{e,\eps}(B(x,r)\cap E)=0\}.\]
\end{dfn}

\begin{rmk}\label{R1}
Although we will not use it directly, we remark that $\NN(E,x)$ is a linear subspace
of $\R^n$ for any $x\in E$. This follows from results on ``joining cones'' in~\cite{ACP}, but
we describe a quick argument based on
the result of M\'ath\'e. Since $\lambda \NN(E,x)=
\NN(E,x)$ for each $\lambda\in\R$, we conclude that every nonzero $e$ from the linear span of $\NN(E,x)$ can be written as
$e=\sum_{i=1}^k e_i$ where $e_i\in\NN(E,x)\setminus\{0\}$.
Suppose $\eps>0$ is fixed and $\gamma\in\Gamma_{e,\eps}$ belongs to $C^1(\R)$ and satisfies
$\|\gamma'(t)\|=1$ for all $t\in\R$ (cf.\ remarks after Definition~\ref{DF1}). Let $\alpha=\frac12\eps\|e\|/\sum_i\|e_i\|$ and find $\delta>0$
such that $w_{e_i,\alpha}(E\cap B(x,\delta))=0$ for each~$i$.
From Definition~\ref{DF1} we see that there is a Borel
(in fact $G_\delta$) set $G\supset E$ such that
$w_{e_i,\alpha}(G\cap B(x,\delta))=0$ for every~$i$.
Fix now any $t\in\R$ and notice that there exists an $i$ such that $\spr{\gamma'(t)}{e_i}\ge2\alpha\|e_i\|$. By continuity of $\gamma'$ there is a $\tau>0$ such that for this particular $i$ we have
$\spr{\gamma'(s)}{e_i}>\alpha\|e_i\|$ whenever $s\in(t-\tau,t+\tau)$.
Hence $w_{e_i,\alpha}(G\cap B(x,\delta))=0$ for this $i$ implies $|\gamma^{-1}(G\cap B(x,\delta))\cap(t-\tau,t+\tau)|=0$. Finally, existence of such $\tau>0$ for each $t\in\R$ allows us to conclude that $|\gamma^{-1}(G\cap B(x,\delta))|=0$. As this holds for every $\gamma\in\Gamma_{e,\eps}$, we get $w_{e,\eps}(G\cap B(x,\delta))=0$.
\end{rmk}

\begin{dfn}\label{def-cu}
A set $E\subset\R^n$ satisfying $\NN(E,x)\ne\{0\}$
for every $x\in E$
is said to be \emph{cone unrectifiable.}
\end{dfn}

\begin{rmk}\label{rmk-cu}
Of course any cone unrectifiable set is Lebesgue null.
A basic example of cone unrectifiable sets $E\subset\R^n$ is provided by those
for which $\NN(E,x)=\R^n$ for every $x\in E$.
Such sets are called \emph{uniformly purely unrectifiable.}
By the result of Andr\'as M\'ath\'e alluded to above these are precisely
those sets that are contained in a Borel $1$\nobreakdash-purely unrectifiable set,
i.e., in a Borel set $B$ whose intersection with
any $C^1$ curve has one-dimensional Hausdorff measure zero.
The arguments used to prove Remark~\ref{R1} simplify their definition
in another way: $E$ is uniformly purely unrectifiable if and only if
there is $0<\eta<1$ such that $w_{e,\eta}(E)=0$ for every unit vector~$e$ (this is used as a definition of a uniformly purely unrectifiable set in \cite{CPT}).
In Example~\ref{e4} we point out that a similar simplification of the notion
of cone unrectifiable sets is false:
given any $e\ne 0$ and $\eta\in(0,1)$,
we construct a set $E$ which does not satisfy the conclusions of the following Theorem~\ref{4c} but is of $C_{e,\eta}$-width zero.
\end{rmk}

We are now ready to state the main results of this paper.

\begin{thm}\label{4c}
If $E\subset\R^n$ is cone unrectifiable,
there is a Lipschitz function $f:\R^n\to\R$
that is non-differentiable at any point of $E$.
\end{thm}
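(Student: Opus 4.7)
\emph{Step 1: Decomposition into uniform pieces.} For each $x\in E$, cone unrectifiability provides a unit vector $e(x)\in\NN(E,x)$. Fix $\alpha_0=1/2$ and a countable dense subset $\{e_j\}\subset S^{n-1}$. An elementary estimate shows that if $\|e_j-e(x)\|<\alpha_0/2$, then $\Gamma_{e_j,\alpha_0}\subset\Gamma_{e(x),\alpha_0/2}$; combined with $w_{e(x),\alpha_0/2}(B(x,r)\cap E)=0$ (from $e(x)\in\NN(E,x)$ applied with $\eps=\alpha_0/2$) this gives $w_{e_j,\alpha_0}(B(x,r)\cap E)=0$. Hence the countable family
\[
F_{j,m,N}:=\{x\in E:\|x\|\le N,\ w_{e_j,\alpha_0}(B(x,1/m)\cap E)=0\}
\]
covers $E$. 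Relabel these as $E_1,E_2,\ldots$, each carrying a direction $e_l\in S^{n-1}$ and radius $r_l>0$ with $w_{e_l,\alpha_0}(B(x,r_l)\cap E)=0$ uniformly for $x\in E_l$. Note $\alpha_0$ is held \emph{fixed}: the ``for every $\eps$'' clause in the definition of $\NN$ is traded for the countable direction index $j$.

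\emph{Step 2: Oscillating building blocks.} For each $l$, the uniformly vanishing $(e_l,\alpha_0)$-width on $E_l$ should allow one to build a Lipschitz $\phi_l\colon\R^n\to\R$ with $\|\phi_l\|_\Lip$ bounded by a constant independent of $l$, with $\|\phi_l\|_\infty\le 2^{-l}$, and satisfying, at every $x\in E_l$, a second-difference lower bound
\[
\bigl|\phi_l(x+t_ke_l)+\phi_l(x-t_ke_l)-2\phi_l(x)\bigr|\ge c\,t_k
\]
along some sequence of test scales $t_k=t_k(x)\searrow 0$, for an absolute constant $c=c(\alpha_0)>0$. The construction uses nested open covers $G_{l,k}\supset E_l$ with $w_{e_l,\alpha_0}(G_{l,k})<2^{-k}$, on each of which one places a sawtooth-profile bump in the $e_l$-direction with alternating slopes $\pm c$; the width bound ensures that the resulting function remains globally Lipschitz. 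One further arranges the ``kinks'' of $\phi_l$ to occur only at a preassigned discrete set of scales $\{s_{l,k}\}\searrow 0$, with $\phi_l$ being $C^1$ with controlled modulus of continuity of the gradient at all other scales.

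\emph{Step 3: Assembly and main obstacle.} Set $f:=\sum_l\phi_l$, a Lipschitz function. Assume for contradiction that $f$ is differentiable at some $x\in E$, and pick $l$ with $x\in E_l$. Then $(f(x+te_l)+f(x-te_l)-2f(x))/t\to 0$ as $t\to 0$. But at $t=t_k$ the term $\phi_l$ contributes at least $c$; meanwhile, by placing the active-scale tables $\{s_{l,k}\}$ in mutually disjoint geometric windows, each $\phi_{l'}$ with $l'\neq l$ is $C^1$ at scale $t_k$ with an $o(t_k)$ second-difference contribution. Summing yields a second-difference quotient of $f$ at $x$ bounded below by a positive constant along $t=t_k$, a contradiction. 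The main technical obstacle is precisely engineering Step 2 so that the scale coordination in Step 3 is possible---simultaneously realising on every $E_l$ a definite non-smooth oscillation at arbitrarily fine scales, while preserving enough off-scale smoothness to prevent destructive interference when the $\phi_l$ are summed.
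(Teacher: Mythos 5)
The broad shape of your plan---cover $E$ by countably many pieces with width control in a fixed direction, build an oscillating Lipschitz block on each piece, and sum---is a natural first attempt, but the argument as written has two gaps, and they are exactly where the paper's actual proof does its work.

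\textbf{Step 2 is not a construction.} You assert that the uniform width bound on $E_l$ ``should allow one to build'' a Lipschitz $\phi_l$ with a definite second-difference lower bound at every $x\in E_l$ along some scales $t_k(x)\searrow 0$, and moreover that the ``kinks'' of $\phi_l$ can be confined to a \emph{preassigned} sequence of scales $\{s_{l,k}\}$ with $\phi_l$ being $C^1$ with controlled gradient modulus elsewhere. None of this is justified, and the last claim is false in general. The scales at which any such building block must oscillate near a given $x\in E_l$ are dictated by the geometry of $E_l$ near $x$ (for the supremum-type function $g_k$ of Lemma~\ref{1} they are controlled by $\dist(x,\partial G_k)$ for nested open covers $G_k\supset E_l$ of small width), not by the constructor. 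There is a built-in conflict: smoothing $\phi_l$ at unwanted scales to achieve the separation you want destroys the oscillation at exactly those points of $E_l$ whose relevant scales fall in the smoothed windows. Resolving this tension is the heart of the matter, and the sketch passes over it.

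\textbf{Step 3's assembly hides the same problem.} Even granting Step 2, the claim that placing $\{s_{l,k}\}$ in ``mutually disjoint geometric windows'' makes each $\phi_{l'}$ ($l'\neq l$) contribute $o(t_k)$ to the second difference at $x$ is not a consequence of the stated properties; the $E_l$ overlap, points of $E_{l'}$ can accumulate at $x$, and the $\phi_{l'}$ are supported globally. The paper is quite explicit that a ``sum of independently built blocks'' strategy in the spirit of Zahorski---each block non-differentiable on its own piece and differentiable off it---cannot work for general cone unrectifiable sets: that is the content of Example~\ref{e2} and the discussion preceding Corollaries~\ref{CAM} and~\ref{CAMx}, where the strategy is carried out only under the extra hypothesis that the pieces are $F_\sigma$. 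Your $E_l$ have no reason to be $F_\sigma$, so even the weaker version of the summation argument is unavailable without additional ideas. The paper's Theorem~\ref{4} instead constructs $f$ as a limit of a single recursive sequence $f_j$ (Lemma~\ref{2x} built on Lemmas~\ref{1}--\ref{3}), with each $f_j\in C^1$ on a shrinking neighbourhood $H_j\supset E$, and the non-differentiability at $x\in E$ comes from $f_j'(x)$ alternating between values close to $u(x)+e$ and $u(x)-e$ as $j$ grows, rather than from summing independently placed sawtooths.

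A smaller point: freezing $\alpha_0=1/2$ also loses information you need. The width threshold entering Lemma~\ref{1} must shrink as the gradient error $\eps$ shrinks (there $\taux=\sin\beta$ with $\tan\beta<\eps/2$), and the paper's Lemma~\ref{3} uses arbitrarily small $\sigma_l$ for precisely this reason. For bare non-differentiability one might hope to get away with a fixed error, but then the cone parameter must match the construction, and $\alpha_0=1/2$ is far from the value the width-supremum construction requires for a useful gradient bound.
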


There are various ways of stating
that non-differentiability of a function $f$
at a given point $x$ is rather strong.
The most usual one is by comparing
the upper and lower directional
derivatives of $f$ at $x$ defined by
\begin{align*}
{D^+}f(x;y)&:=\limsup_{t\searrow 0}\frac{f(x+ty)-f(x)}{t}\\
\shortintertext{and}
{D_+}f(x;y)&:=\liminf_{t\searrow 0}\frac{f(x+ty)-f(x)}{t},
\end{align*}
respectively. An even stronger non-differentiability
statement is obtained by showing that close to $x$,
$f$ may be approximated by many linear functions.
Our next result shows that the non-differentiability
statement of Theorem~\ref{4c} may be strengthened
in this way.

\begin{thm}\label{4}
For every cone unrectifiable set $E\subset\R^n$
and every $\eps>0$ there are a Lipschitz function $f:\R^n\to\R$
with $\Lip(f)\le 1+\eps$ and
a continuous function $u:E\to\{z\in\R^n: \|z\|\le\eps\}$ such that
\begin{equation}\label{E1}
\adjustlimits\liminf_{r\searrow 0}\sup_{\|y\|\le r}
\frac{|f(x+y)-f(x)-\spr{e+u(x)}y|}{r} =0
\end{equation}
whenever $x\in E$ and $e\in \NN(E,x)$ has $\|e\|\le 1$.
In particular,
\[{D^+}f(x;y)-{D_+}f(x;y)
\ge 2 \sup \bigl\{\spr ey: e\in \NN(E,x),\,\|e\|\le 1\bigr\}\]
whenever $x\in E$ and $y\in\R^n$.

Additionally, if $E$ is contained in $\{x:\omega(x)>0\}$,
where $\omega:\R^n\to[0,\infty)$ is continuous,
then $f$ may be chosen in such a way that $|f|\le\omega$.
\end{thm}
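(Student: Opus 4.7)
The plan is to construct $f$ as a countable sum $f=\sum_k\phi_k$ of \emph{cone bumps}, each a Lipschitz function tied to a triple $(e_k,\eta_k,G_k)$: here $e_k$ runs through a countable dense subset of $\{e\in\R^n:\|e\|\le 1\}$, $\eta_k\searrow 0$, and $G_k$ is an open set with $w_{e_k,\eta_k}(G_k)\le\delta_k$ for a rapidly decreasing $\delta_k>0$. The individual Lipschitz constants will sum to at most $1+\eps$, so $\Lip(f)\le 1+\eps$. The triples come from Definition~\ref{NV}: for every $x\in E$, every $e\in\NN(E,x)$ with $\|e\|\le 1$, and every $\eta>0$ there is $r>0$ and an open $G\supset B(x,r)\cap E$ with $w_{e,\eta}(G)$ as small as we wish. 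A standard enumeration and density argument then produces a family $\{(e_k,\eta_k,G_k)\}$ with the \emph{capture property}: for every $x\in E$ and every $e\in\NN(E,x)$ with $\|e\|\le 1$ there is an infinite subsequence $k_j$ with $e_{k_j}\to e$, $x\in G_{k_j}$, and capture radii $r_{x,k_j}\searrow 0$ on which $\phi_{k_j}$ behaves linearly (see below).

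The main ingredient is a \emph{cone bump lemma}: for $\|e\|\le 1$, $\eta>0$, and an open $G\subset\R^n$ with $w_{e,\eta}(G)\le\delta$, there is a Lipschitz function $\phi:\R^n\to\R$ with $\Lip(\phi)\le\|e\|$, $|\phi|\le\delta$, and $\spt\phi$ in a small enlargement of $G$, satisfying: whenever $B(x,r)\cap E\subset G'\subset G$ with $G'$ open and $w_{e,\eta}(G')=0$, the function $\phi$ coincides on some $B(x,r')\subset G'$ (with $r'$ comparable to $r$) with $y\mapsto\spr{e}{y}+\mathrm{const}$, up to an error of order $\eta r'$. The construction exploits the fact that $w_{e,\eta}(G)\le\delta$ forces every unit-speed $C^1$ curve with tangent in $C_{e,\eta}$ to spend $\H^1$-time at most $\delta$ in $G$; a Moreau--Yosida regularization of $y\mapsto\spr{e}{y}$ using a cone-adapted pseudo-metric (the shortest length of a path inside $G$ whose tangent lies in $C_{e,\eta}\cup C_{-e,\eta}$) then produces the desired $\phi$, exactly affine on the ``cone-deep'' portion of $G'$.

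Given $f=\sum_k\phi_k$, the conclusion is verified as follows. At each $x\in E$ and each $e\in\NN(E,x)$, the capture property supplies indices $k_j$ for which $\phi_{k_j}$ restricted to $B(x,r_{x,k_j})$ equals $\spr{e_{k_j}}{y-x}$ exactly. By arranging the capture radii so that, for each $x$ and each $j$, every earlier bump $\phi_k$ with $k<k_j$ is either zero or affine on $B(x,r_{x,k_j})$, the combined contribution of the first $k_j-1$ bumps near $x$ is an affine function with a gradient $u(x)$; continuity of $u$ on $E$ follows from the uniform convergence of the associated gradient series, and $\|u(x)\|\le\eps$ is arranged by making the Lipschitz constants of the bumps sufficiently small. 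The tail $\sum_{\ell>k_j}\phi_\ell$ on $B(x,r_{x,k_j})$ is bounded by $\sum_{\ell>k_j}\delta_\ell$, which we choose to be $o(r_{x,k_j})$. Letting $e_{k_j}\to e$ yields \eqref{E1}. The bound $|f|\le\omega$ is enforced by further restricting $\spt\phi_k$ to the open set $\{\omega>M_k\}$ for an appropriate threshold $M_k$, still compatible with the capture property thanks to the continuity and positivity of $\omega$ on $E$. The chief obstacle is the cone bump lemma: producing a bump of oscillation at most $\delta$ which is \emph{exactly} affine on a relatively open subset of $G$ (rather than merely approximately so) requires delicate use of cone-geodesic constructions and the smallness of $w_{e,\eta}(G)$.
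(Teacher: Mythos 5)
Your plan is structurally different from the paper's (the paper builds $f$ as a limit of a recursive construction using a partition-of-unity combinatorics plus a smoothing lemma, not as a literal sum of local bumps), but more importantly it contains gaps that would not close as written. First, the cone bump lemma is stated inconsistently: you require an open set $G'$ with $B(x,r)\cap E\subset G'$ and $w_{e,\eta}(G')=0$, and then that $\phi$ is affine on some $B(x,r')\subset G'$. But a nonempty open $G'$ containing $x\in E$ contains a ball around $x$, and the straight line through $x$ in direction $e$ is a legal curve in $\Gamma_{e,\eta}$, so $w_{e,\eta}(G')>0$ whenever $G'$ is nonempty and open; the zero-width hypothesis forces $G'=\emptyset$. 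What one actually has (and what the paper's Lemma~\ref{1} exploits) is that $w_{e,\taux}(G)$ can be made \emph{small but positive} for an open $G\supset E$. Second, you vacillate between claiming $\phi$ is exactly $\spr e\cdot+\mathrm{const}$ on $B(x,r')$ and ``up to an error of order $\eta r'$''; exact affinity on a ball of radius $r'$ already costs oscillation $2r'\|e\|$, and making infinitely many bumps simultaneously exactly affine on prescribed balls (with the ``either zero or affine'' property for all earlier bumps) is an unjustified rigidity claim. The paper settles for near-affinity ($\|g'-e\|\le\eps$ a.e. on an open $H$) and then smooths via Lemma~\ref{AL} to upgrade to $C^1$ on $H$ so that the pointwise estimate at $x\in E$ can be extracted in Lemma~\ref{3x}.

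The most serious missing ingredient, though, is the control of $\Lip(f)$. The capture property requires that around every $x\in E$ there are infinitely many indices $k$ with $e_k\to e$, $\|e\|$ up to $1$, and $\phi_k$ nontrivial (gradient $\approx e_k$) near $x$. If each such $\phi_k$ has Lipschitz constant $\approx\|e_k\|\approx 1$, the bumps cannot have Lipschitz constants summing to $1+\eps$ -- that would force all but finitely many to have negligible slope, killing the capture mechanism. The paper resolves this tension with a genuine cancellation device: it pairs $e_{2l-1}$ with $e_{2l}=-e_{2l-1}$ and uses the nestedness of the sets $H_j$ so that $\psi_{2l-1}e_{2l-1}+\psi_{2l}e_{2l}=-(\psi_{2l-1}-\psi_{2l})e_{2l}$ is nonzero only on the annulus $H_{2l-2}\setminus H_{2l}$; these annuli are disjoint, so the partial gradient sums stay bounded by $1$ in norm, and likewise $u(x)=\lim f_{2k}'(x)$ stays of norm $\le\eps$. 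Nothing in your proposal plays the role of this $\pm e$ pairing or of the nested $H_j$ telescoping, so neither $\Lip(f)\le 1+\eps$ nor the convergence and smallness of $u$ is established.
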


For a set $E$ admitting an $(n-1)$-dimensional continuous tangent
we obviously have
$\NN(E,x)\supset \tau(x)^\perp\ne\{0\}$. Hence
such sets are cone unrectifiable and so are sets of non-differentiability of
a real valued Lipschitz function.
More interestingly, having
countably many cone unrectifiable sets,
we may try to add the functions obtained from Theorem~\ref{4} to get
a function non-differentiable at the points of the union.
However, addition of non-differentiable functions
may create new points of differentiability.
To solve this problem we employ the idea that Zahorski \cite{Z} used in his
precise description of non-differentiability sets of
Lipschitz functions on the real line as $G_{\delta\sigma}$-sets
of measure zero; it is based on the simple observation
that the sum of a differentiable and a non-differentiable function
is non-differentiable. For this we need the function $f$ obtained
in Theorem~\ref{4} to be differentiable outside $E$, in other words, to have that $E$ coincides with the set of points where $f$ is not differentiable. This is
however not possible in general as shown in Example~\ref{e2}.
In the following two Corollaries we solve this
difficulty by making a special assumption
that the sets we consider~are~$F_\sigma$.

\begin{cor}\label{CAM}
Suppose $E=\bigcup_k E_k\subset\R^n$,
where $E_k$ are disjoint cone unrectifiable $F_\sigma$ sets,
and let $\NN_x:=\NN(E_k,x)\cap\B$ when $x\in E_k$.
Then there is a Lipschitz $f:\R^n\to\R$ such that
\begin{itemize}
\item
$f$ is differentiable at every $x\in\R^n\setminus E$;
\item
for every $x\in E$ there is $c>0$ such that for every $y\in\R^n$,
\[{D}^+f(x;y)-{D}_+f(x;y)\ge c \sup_{e\in \NN_x}\spr ey,\]
so, in particular, $f$ is not differentiable at $x$.
\end{itemize}
\end{cor}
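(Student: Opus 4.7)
My strategy is Zahorski-style: reduce to a countable family of closed cone unrectifiable sets, apply Theorem~\ref{4} to each, and sum. Since every $E_k$ is $F_\sigma$, I write $E_k=\bigcup_j F_{k,j}$ as an increasing union of closed sets and enumerate $\{F_{k,j}\}_{k,j}$ as a single sequence $(F_i)_{i\in\N}$ with each $F_i\subset E_{\kappa(i)}$. Monotonicity of $w_{e,\alpha}$ in the set (direct from Definition~\ref{DF1}) gives $\NN(F_i,x)\supset\NN(E_{\kappa(i)},x)$ for every $x\in F_i$, so each $F_i$ is cone unrectifiable, and at any $x\in E_\ell$ lying in some $F_i$ we have $\NN_x\subset\NN(F_i,x)$, so the tangential data demanded by the conclusion is available from each $F_i$ containing $x$.

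For each $i$ I apply Theorem~\ref{4} to $F_i$ with error parameter $\eps_i=2^{-i}$ and a continuous weight $\omega_i$ that is positive on $F_i$, bounded by $2^{-i}$, and vanishes outside the $\delta_i$-neighbourhood $V_i$ of $F_i$ for a rapidly shrinking sequence $\delta_i\searrow 0$ to be specified. Theorem~\ref{4} produces a Lipschitz function $g_i$ with $\Lip(g_i)\le 1+2^{-i}$, $|g_i|\le\omega_i$ (so $g_i$ is supported in $V_i$), and the linear-approximation identity \eqref{E1} at every $x\in F_i$ and every $e\in\NN(F_i,x)\cap\B$. The candidate function is $f:=\sum_i 2^{-i} g_i$, which converges uniformly and is Lipschitz with $\Lip(f)\le 2$.

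For the non-differentiability bound at $x\in E_\ell$, let $i_0$ be the smallest index with $x\in F_{i_0}$; then $\NN_x\subset\NN(F_{i_0},x)$, and \eqref{E1} applied to $g_{i_0}$ yields ${D}^+g_{i_0}(x;y)-{D}_+g_{i_0}(x;y)\ge 2\spr ey$ for every $e\in\NN_x$. Provided the tail $h:=f-2^{-i_0}g_{i_0}$ is Fr\'echet differentiable at $x$, the directional derivatives of $h$ cancel in the difference, and we obtain ${D}^+f(x;y)-{D}_+f(x;y)\ge 2^{1-i_0}\spr ey$, giving the required inequality with $c=2^{1-i_0}$.

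The main obstacle is therefore engineering the $\omega_i$'s so that both $f$ is Fr\'echet differentiable at every $x\notin E$ and the tail $h$ is differentiable at each selected $x\in E$. For $x\notin E$ one has $\dist(x,F_i)>0$ for every $i$, so a rapidly decreasing $\delta_i$ already ensures that only finitely many $g_i$'s are non-zero in a neighbourhood of $x$; the delicate task is to then arrange, via a further refinement of the $F_\sigma$-decomposition into finer closed pieces and an inductive choice of the $\delta_i$, that these finitely many residual $g_i$'s are also differentiable at $x$---for instance by imposing that $\omega_i$ vanishes to order higher than one at $x$, so that $|g_i(y)|\le\omega_i(y)=o(|y-x|)$ and hence $g_i$ is differentiable at $x$ with zero derivative. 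The same engineering handles $h$ at $x\in E_\ell$, since only $g_{i_0}$ is forced by Theorem~\ref{4} to be non-differentiable there and all the other $F_i$'s can be arranged (by the choice of $\delta_i$) to leave $x$ outside their supports.
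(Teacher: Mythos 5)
Your strategy shares the paper's Zahorski-style reduction and the use of Theorem~\ref{4} with compactly-supported weights, but it has two genuine gaps that the paper's proof is specifically constructed to overcome.

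First, your decomposition into \emph{increasing} closed sets $F_{k,j}$ is the wrong slicing. Each $x\in E_k$ lies in $F_{k,j}$ for \emph{all} $j\ge j_0(x)$, hence in infinitely many of your $F_i$'s, and so every weight $\omega_i$ with $x\in F_i$ is strictly positive at $x$. Consequently the tail $h=f-2^{-i_0}g_{i_0}$ contains infinitely many summands $g_i$ that Theorem~\ref{4} renders non-differentiable at $x$; your claim that ``all the other $F_i$'s can be arranged (by the choice of $\delta_i$) to leave $x$ outside their supports'' is false, because no choice of $\delta_i$ can remove from the support of $g_i$ a point that is actually in $F_i$. The paper first disjointifies: it sets $F_{k,j}:=\bigcup_{i<j}H_{k,i}$ and works with the pairwise disjoint pieces $E_{k,j}:=H_{k,j}\setminus F_{k,j}$, and chooses $\omega_{k,j}(x):=c_{k,j}\min(1,\dist^2(x,F_{k,j}))$, vanishing to second order on $F_{k,j}$. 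That second-order vanishing is exactly what forces $f_{k,j}$ (and later $g_{k,j}$) to be differentiable with derivative zero at the points of the ``earlier'' pieces $F_{k,j}$, so that each $x\in E$ is fed its non-differentiability by exactly one summand.

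Second, and more fundamentally, Theorem~\ref{4} gives no control whatsoever over the differentiability of $g_i$ on $\{\omega_i>0\}\setminus F_i$; that function can fail to be differentiable throughout its support, not just on $F_i$. Your suggested remedy---arranging $\omega_i$ to vanish to order higher than one at the residual points---only applies where $\omega_i(x)=0$, i.e.\ outside the support; it gives nothing at an $x$ in the interior of $\{\omega_i>0\}$, which is precisely where the problem lies (points near but not in $F_i$ that are also outside $E$). This cannot be fixed by a choice of weight alone; one must actually modify the function. The paper does this by passing each $f_{k,j}$ through the smoothing Lemma~\ref{AL}, producing $g_{k,j}\in C^1(\{\omega_{k,j+1}>0\})$ with $|g_{k,j}-f_{k,j}|\le\omega_{k,j+1}$, which together with the second-order vanishing of $\omega_{k,j+1}$ on $F_{k,j+1}=F_{k,j}\cup H_{k,j}$ yields: $g_{k,j}$ is differentiable at every $x\notin H_{k,j}$ and agrees to second order with $f_{k,j}$ at every $x\in H_{k,j}$. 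That is the missing mechanism; without Lemma~\ref{AL} (or an equivalent smoothing step) the sum $f$ has no reason to be differentiable on $\R^n\setminus E$, and the proposal as written does not prove the Corollary.
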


If we are not interested in estimates of the difference
of the upper and lower derivatives, Corollary~\ref{CAM} gives
the following more naturally sounding statement.

\begin{cor}\label{CAMx}
For any $E\subset\R^n$
that is a countable union of cone unrectifiable $F_\sigma$ sets
there is a Lipschitz function $f:\R^n\to\R$
that is non-differentiable at any point of $E$ and is differentiable at any point of its complement $E^c$.
\end{cor}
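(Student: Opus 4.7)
The plan is to deduce Corollary~\ref{CAMx} from Corollary~\ref{CAM} by a soft reduction: I just need to rewrite $E$ as a \emph{disjoint} countable union of cone unrectifiable $F_\sigma$ sets, after which Corollary~\ref{CAM} applies directly and yields the desired $f$.

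First, I would observe that cone unrectifiability is hereditary under taking subsets. Indeed, for any $E'\subset A\subset\R^n$ and any $x\in E'$, any open set containing $B(x,r)\cap A$ also contains $B(x,r)\cap E'$, so $w_{e,\eps}(B(x,r)\cap E')\le w_{e,\eps}(B(x,r)\cap A)$ and hence $\NN(E',x)\supset\NN(A,x)$. Thus, if $A$ is cone unrectifiable, so is every subset of $A$.

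Next I would perform the disjointification. Write $E=\bigcup_k A_k$ with each $A_k$ cone unrectifiable and $F_\sigma$, and then write $A_k=\bigcup_i F_{k,i}$ with $F_{k,i}$ closed. Enumerate the countable family $\{F_{k,i}\}$ as $\{F_m\}_{m\ge 1}$, each contained in some $A_{k(m)}$. Put
\[
E_m := F_m\setminus\bigcup_{j<m}F_j = F_m\cap\bigcap_{j<m}F_j^c.
\]
Then $(E_m)_m$ is a disjoint family with $\bigcup_m E_m=E$. Each $E_m$ is the intersection of a closed set with an open set in $\R^n$; since every open set in $\R^n$ is $F_\sigma$ (a countable union of closed balls), $E_m$ is $F_\sigma$. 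By the hereditary property of the previous paragraph, $E_m\subset F_m\subset A_{k(m)}$ is cone unrectifiable.

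Finally I would invoke Corollary~\ref{CAM} applied to the disjoint family $\{E_m\}$ to produce a Lipschitz $f:\R^n\to\R$ which is differentiable at every point of $\R^n\setminus E$ and, at every $x\in E_m\subset E$, satisfies the estimate ${D}^+f(x;y)-{D}_+f(x;y)\ge c\sup_{e\in\NN_x}\spr{e}{y}$ with some $c>0$ depending on $x$. Since $\NN(E_m,x)\ne\{0\}$, picking any $y$ with nontrivial inner product against some $e\in\NN(E_m,x)\cap\B\setminus\{0\}$ shows $f$ is not differentiable at $x$, and this completes the proof. The only non-trivial step is the disjointification, and even that is essentially a standard descriptive set theory manoeuvre, so no genuine obstacle is expected.
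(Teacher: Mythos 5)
Your proposal is correct and follows essentially the same route as the paper: write each $F_\sigma$ piece as a countable union of closed sets (which are cone unrectifiable by heredity under subsets), enumerate, disjointify by subtracting earlier terms, observe each piece is still $F_\sigma$ and cone unrectifiable, and then invoke Corollary~\ref{CAM}. You spell out the heredity-under-subsets observation explicitly, which the paper uses silently, but there is no substantive difference.
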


The next Corollary~\ref{ACP-AM} contains the constructions of
$\mu$-almost everywhere non-differentiable functions from \cite{ACP}
and \cite[Theorem 4.1]{AM}. Given a Radon measure $\mu$ in $\R^n$, these authors
assign to $\mu$-a.a.\ points a linear subspace $T(x)$ of $\R^n$
that in certain sense represents the directions of curves on which $\mu$
is ``seen''. For \cite{ACP}, the definition of ``seen'' is exactly the assumption of
Corollary~\ref{ACP-AM} while \cite{AM} bases the definition on a
related but different property and shows in \cite[Lemma~7.5]{AM} that the assumption of Corollary~\ref{ACP-AM} is satisfied. It is, however, important to point out that both these references
define the linear space $T(x)$ which is in a natural sense smallest, and this allows them
to obtain also a counterpart to Corollary~\ref{ACP-AM} that every Lipschitz function
is $\mu$-a.e.\ differentiable in the direction of $T(x)$. We also notice
that the constructions of $\mu$-almost everywhere non-differentiable
Lipschitz functions have been strengthened in a different direction by \cite{MS}
where the authors produce functions that $\mu$-a.e.\ admit any blow-up behaviour permitted by
the differentiability results.

\begin{cor}\label{ACP-AM}
Let $\mu$ be a Radon measure on $\R^n$ and $T$ a
$\mu$-measurable map of
$\R^n$ to $\bigcup_{m=0}^n G(n,m)$
such that for every unit vector $e$ and $0<\alpha<1$, the set
$\{x: C_{e,\alpha}\cap T(x)=\nolinebreak\{0\}\}$
is the union of a $\mu$-null set and a set $E$ with $w_{e,\alpha}(E)=0$.
Then there is a real valued Lipschitz function $f$ on $\R^n$ such that
for $\mu$-a.e.\ $x\in\R^n$ there is $c>0$ such that
${D}^+f(x,v)-{D}_+f(x,v) \ge c \dist(v,T(x))$
for every $v\in\R^n$.
\end{cor}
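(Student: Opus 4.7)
The plan is to reduce the corollary to an application of Corollary~\ref{CAM}. Specifically, I would construct pairwise disjoint cone unrectifiable $F_\sigma$ sets $F_k\subset\R^n$ whose union covers $\mu$-almost all of $\{x:T(x)\subsetneq\R^n\}$ and satisfies $\NN(F_k,x)\supseteq T(x)^\perp$ for every $x\in F_k$. Once this is in place, Corollary~\ref{CAM} applied to $E=\bigcup_k F_k$ produces a Lipschitz $f$ with
\[{D}^+f(x,v)-{D}_+f(x,v)\ge c\sup\{\spr{e}{v}:e\in\NN(F_k,x)\cap\B\}\ge c\dist(v,T(x)),\]
where the second inequality comes from $\sup\{\spr{e}{v}:e\in T(x)^\perp,\,\|e\|\le1\}=\dist(v,T(x))$; the $\mu$-almost-every $x$ with $T(x)=\R^n$ needs no argument because the right-hand side is then $0$.

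To build the $F_k$'s I would fix a countable dense set of unit vectors $\{e_j\}$ and, for each $(j,m)\in\N^2$, invoke the hypothesis on $(e_j,1/m)$ to write
\[\{x:C_{e_j,1/m}\cap T(x)=\{0\}\}=N_{j,m}\cup E_{j,m}^0\]
with $\mu(N_{j,m})=0$ and $w_{e_j,1/m}(E_{j,m}^0)=0$. Setting $N=\bigcup_{j,m}N_{j,m}$, I would apply Lusin's theorem to $T$ restricted to each level set $A_m=\{x:\dim T(x)=m\}$ with $0\le m<n$, producing compacta $K_{m,l}\subset A_m\setminus N$ on which $T$ is continuous and which exhaust $A_m\setminus N$ up to $\mu$-nullity. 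Reindexing these as $K_1,K_2,\ldots$ and disjointifying via $F_k=K_k\setminus\bigcup_{j<k}K_j$ gives a sequence of $F_\sigma$ sets, each being the intersection of a compact set with an open set.

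The substantive step, and the only real obstacle, is to verify that $\NN(F_k,x)\supseteq T(x)^\perp$ for every $x\in F_k$. Given such an $x$, a unit vector $e\in T(x)^\perp$, and $\eps>0$, I would pick $m$ large and $e_j$ sufficiently close to $e$ so that simultaneously $C_{e,\eps}\subset C_{e_j,1/m}$ and $C_{e_j,1/m}\cap T(x)=\{0\}$ with a definite margin. Continuity of $T$ on $K_k$ then propagates the latter condition to $T(y)$ for every $y\in K_k\cap B(x,r)$ with $r$ small enough, forcing $K_k\cap B(x,r)\subset N_{j,m}\cup E_{j,m}^0$. Since $K_k$ avoids $N$, this in fact gives $K_k\cap B(x,r)\subset E_{j,m}^0$, so the cone inclusion yields
\[w_{e,\eps}(F_k\cap B(x,r))\le w_{e_j,1/m}(F_k\cap B(x,r))\le w_{e_j,1/m}(E_{j,m}^0)=0,\]
as required. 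Since $T(x)^\perp\ne\{0\}$ on $F_k\subset\bigcup_{m<n}A_m$, this simultaneously confirms cone unrectifiability. The bookkeeping between $\eps$, $1/m$, and $\|e-e_j\|$ is delicate but routine; everything else is a direct invocation of Corollary~\ref{CAM}.
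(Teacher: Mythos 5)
Your proposal is correct and follows essentially the same route as the paper: Lusin's theorem to obtain closed pieces on which $T$ is continuous, a countable dense family of directions and cone half-angles to invoke the hypothesis, removal of the $\mu$-exceptional sets, and then the continuity of $T$ to localise and show $T(x)^\perp\subset\NN(E_k,x)$, finishing with Corollary~\ref{CAM} and the identity $\dist(v,T(x))=\sup\{\spr{e}{v}:e\in T(x)^\perp,\|e\|\le1\}$. One small point in your favour: by stratifying into the level sets $A_m=\{\dim T(x)=m\}$ with $m<n$ you make cone unrectifiability of each piece immediate and cleanly dispose of the $\mu$-negligible-by-triviality set $\{T(x)=\R^n\}$, whereas the paper's statement that the $E_k$ cover $\mu$-almost all of $\R^n$ is strictly speaking only achievable when $T(x)\ne\R^n$ $\mu$-a.e.; your version sidesteps that wrinkle.
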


Our final result deals with the \emph{uniformly purely unrectifiable} sets
introduced in Remark~\ref{rmk-cu}. For such sets the statement of
Theorem~\ref{4} concerning upper and lower derivatives is shown in \cite{ACP}.
We prove a stronger result, namely that these sets are
non-universal differentiability
sets in the strongest possible sense, which corresponds
to having $\eps=0$ in Theorem~\ref{4}.
However, in Example~\ref{e} we demonstrate that such an
improvement is specific to the case of uniformly purely unrectifiable
sets even when $E\subset\R^2$ is compact, for each $x\in E$ the set
$\NN(E,x)$ is a one-dimensional
linear subspace of $\R^2$
and the map $x\mapsto \NN(E,x)$ is continuous.

\begin{thm}\label{9}
For every uniformly purely unrectifiable set $E\subset\R^n$ there is
a real valued $1$-Lipschitz function $f$ on $\R^n$ such that
\begin{equation}\label{E2}
\adjustlimits\liminf_{r\searrow 0}\sup_{\|y\|\le r} \frac{|f(x+y)-f(x)-\spr{e}y|}{r} =0
\end{equation}
for every $x\in E$ and $e\in \R^n$ with $\|e\|\le 1$.
In particular,
${D}^+f(x;y) =\|y\|$ and ${D}_+f(x;y)=-\|y\|$
for every $x\in E$ and $y\in\R^n$.
\end{thm}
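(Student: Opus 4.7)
The plan is to mimic the construction underlying Theorem~\ref{4} but to exploit the strength of uniform pure unrectifiability to eliminate both the Lipschitz loss $1+\eps$ and the perturbation $u(x)$ appearing in~\eqref{E1}. Two features of the hypothesis are crucial: $\NN(E,x)=\R^n$ for every $x\in E$, and, as recorded in Remark~\ref{rmk-cu}, a single fixed $\eta\in(0,1)$ witnesses $w_{e,\eta}(E)=0$ for every unit vector~$e$.

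The first step is a density reduction. Because the target $f$ will be $1$-Lipschitz, for any $e,e'\in\B$ and any $r>0$ one has $\sup_{\|y\|\le r}|\spr{e-e'}{y}|\le\|e-e'\|\,r$, so the $\liminf$ in~\eqref{E2} for $e$ exceeds that for $e'$ by at most $\|e-e'\|$. Hence it suffices to enforce \eqref{E2} at every $x\in E$ for every entry of a fixed countable dense sequence $(e_k)_{k\ge1}\subset\B$ in which each vector is repeated infinitely often.

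Next I would construct $f=\lim_k f_k$ inductively, setting $f_0\equiv 0$ and $f_k=f_{k-1}+h_k$ at a rapidly decreasing sequence of scales $r_k\searrow 0$. At stage $k$, Definition~\ref{NV} applied to the single direction $e_k$, combined with $\NN(E,x)=\R^n$, yields for each $x\in E$ a small $\alpha_k>0$ and a neighbourhood $U_{k,x}$ with $w_{e_k,\alpha_k}(U_{k,x}\cap E)=0$; covering $E$ by countably many such neighbourhoods produces an open $G_k\supset E$ whose $(e_k,\alpha_k)$-width is arbitrarily small. On $G_k$ I would install a piecewise-linear ramp whose gradient equals $e_k$ on a tubular piece of $G_k$ around $E$ at scale $r_k$, and extend the resulting function to $\R^n$ via a McShane $1$-Lipschitz extension. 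The smallness of $w_{e_k,\alpha_k}(G_k)$ bounds the total arc length of any curve $\gamma\in\Gamma_{e_k,\alpha_k}$ inside $G_k$, which is exactly what is needed for the ramp installation to leave the global Lipschitz constant equal to $1$.

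Uniform convergence $f_k\to f$ (hence $\Lip(f)\le 1$) follows by making $\|h_k\|_\infty$ summable, and \eqref{E2} at any prescribed pair $(x,e_j)$ follows from the construction along the subsequence of indices $k$ with $e_k=e_j$: by design $f_k(x+y)-f_k(x)=\spr{e_j}{y}$ on $\|y\|\le r_k$, and later perturbations contribute only $o(r_k)$ to the total error. The main obstacle is the geometric ramp installation preserving the \emph{exact} Lipschitz bound $1$, rather than $1+\eps_k$ as in Theorem~\ref{4}; this will require a quantitative cone-geometry argument along the lines of~\cite{ACP}, where the fact that only one direction $e_k$ (rather than all directions simultaneously) needs to be addressed at stage $k$ is what opens up the slope budget required to keep $\Lip(f_k)=1$.
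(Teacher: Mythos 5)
Your outline correctly identifies the density reduction and the idea of processing one target direction per stage, but it contains a gap that would break the construction, and the obstacle you do flag is resolved by a different device than you anticipate.

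The decisive problem is the accumulation of gradients. You propose $f_k=f_{k-1}+h_k$, where $h_k$ is a ramp whose gradient is roughly $e_k$ near $E$. Then near $x\in E$ the gradient of $f_k$ is roughly $\sum_{j\le k}e_j$, which has no uniform bound, and no McShane extension of an individual summand can fix the Lipschitz constant of the \emph{sum}. The paper avoids this by \emph{replacing} rather than adding the derivative at points of $E$: this is precisely what Lemma~\ref{U} does, and it is the real place where uniform pure unrectifiability (as opposed to mere cone unrectifiability) is used. Since $\NN(E,y)=\R^n$ for $y\in E$, the vector $-f_{k-1}'(y)$ is \emph{also} in $\NN(E,y)$, so at each stage one may apply the width machinery twice — first in direction $-f_{k-1}'(x_j)$ (via a partition of unity supported near each $x_j\in E$), which cancels the accumulated gradient, and then in direction $e$, which installs the new one. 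This yields $\Lip(f_k)\le\max(\Lip(f_{k-1}),\|e\|)+\eta_k$ rather than $\Lip(f_{k-1})+\|e_k\|+\eta_k$. Without such a cancellation mechanism an additive construction cannot stay Lipschitz at all, let alone $1$-Lipschitz.

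The second issue, which you raise but misdiagnose, is how to get the exact constant $1$. The width construction of Lemma~\ref{1} always loses a multiplicative factor $1+\tan\beta$ on the relevant open set, so there is no ``quantitative cone-geometry argument'' that makes each individual stage exactly $1$-Lipschitz. The actual fix in the paper is much cheaper and happens elsewhere: the dense sequence $(e_k)$ is chosen with $\|e_k\|\le 1-2^{-k}$, i.e., strictly inside the ball with a margin $2^{-k}$, and the per-stage error is $\eta_k=2^{-k-1}$. With the replacement bound from Lemma~\ref{U}, an easy induction gives $\Lip(f_k)\le 1-2^{-k-1}<1$, and $\Lip(f)\le 1$ in the limit. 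The margin in the target vectors absorbs the unavoidable stage-wise error; one never needs any stage to be exactly $1$-Lipschitz. Finally, a minor point: your claim that $f_k(x+y)-f_k(x)=\spr{e_j}{y}$ exactly for $\|y\|\le r_k$ cannot hold for a bounded piecewise-linear ramp; the construction only delivers (and only needs, via Lemma~\ref{3x}) the approximate statement $|f(x+y)-f(x)-\spr{e_k}{y}|\le\eta_k r$ at a well-chosen scale $r$.
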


\section{Technical lemmas}

We will work in the space $\R^n$ equipped with the Euclidean norm $\|\cdot\|$.
Most of the notation we use is standard; the open and closed balls
will be denoted by $B(x,r)$ and $\Bc(x,r)$, respectively.
Since we will often need to use the distance of a point to the complement of
an open set, we will simplify the notation for it:
for an open $G\subset\R^n$ we let
\begin{equation}\label{dist-fun}
\rho_G(x):=\dist(x,\R^n\setminus G).
\end{equation}

As usual,
the Lipschitz constant of a real-valued function $f$ defined on a set $E\subset\R^n$
is the smallest constant $\Lip(f,E)\in[0,\infty]$, or just $\Lip(f)$ when $E=\R^n$,
such that
$|f(y)-f(x)|\le \Lip(f,E)\|y-x\|$ for all $x,y\in E$, and
functions with $\Lip(f)\le c$ will be termed $c$-Lipschitz.
The space of Lipschitz functions on~$\R^n$, those for which $\Lip(f)<\infty$, will be denoted by $\Lips(\R^n)$.
We will also use the pointwise Lipschitz constants defined by
$\Lip_x(f):=\limsup_{y\to x}|f(y)-f(x)|/\|y-x\|$ and often use the
following well known fact.

\begin{lem}\label{LL}
For any $f:\R^n\to\R$, it holds that $\Lip(f)=\sup_{x\in\R^n} \Lip_x(f)$.
\end{lem}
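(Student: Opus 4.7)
The plan is to prove the two inequalities separately. The direction $\sup_{x\in\R^n}\Lip_x(f) \le \Lip(f)$ is immediate: if $\Lip(f)=\infty$ there is nothing to check, and otherwise, for any fixed $x$ and every $y\ne x$, the difference quotient $|f(y)-f(x)|/\|y-x\|$ is bounded by $\Lip(f)$, hence so is $\limsup_{y\to x}$ of it.

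For the reverse inequality, I set $L:=\sup_{x\in\R^n}\Lip_x(f)$ and assume $L<\infty$ (otherwise trivial). First observe that $\Lip_x(f)\le L$ at every $x$ forces $f$ to be continuous on $\R^n$, since for each $x$ and $\varepsilon>0$ one can find $\delta>0$ with $|f(y)-f(x)|\le(L+\varepsilon)\|y-x\|$ whenever $\|y-x\|<\delta$. Fix distinct $x,y\in\R^n$ and $\varepsilon>0$; parametrise the segment by $\gamma(t):=(1-t)x+ty$, $t\in[0,1]$, and consider
\[ S:=\bigl\{t\in[0,1] : |f(\gamma(t))-f(x)|\le (L+\varepsilon)\,t\,\|y-x\|\bigr\}. \]
Clearly $0\in S$, and continuity of $f\circ\gamma$ shows $S$ is closed in $[0,1]$. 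The key step is to show that $t_0:=\sup S$ cannot be strictly less than $1$: if it were, then $t_0\in S$ by closedness, and the pointwise condition $\Lip_{\gamma(t_0)}(f)\le L$ would supply an $\eta>0$ such that $|f(z)-f(\gamma(t_0))|\le(L+\varepsilon)\|z-\gamma(t_0)\|$ for all $z$ with $\|z-\gamma(t_0)\|<\eta$. A triangle inequality applied to $z=\gamma(t)$ for $t\in[t_0,t_0+\eta/\|y-x\|)$ then extends membership in $S$ slightly past $t_0$, contradicting maximality. Hence $1\in S$, giving $|f(y)-f(x)|\le(L+\varepsilon)\|y-x\|$, and since $\varepsilon,x,y$ are arbitrary we conclude $\Lip(f)\le L$.

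I do not expect any serious obstacle; the argument is a standard connectedness/chaining proof along the segment joining $x$ and $y$. The only points that require a moment of care are (i) establishing continuity of $f$ before invoking closedness of $S$, which is automatic from $L<\infty$, and (ii) realising that the pointwise condition at $\gamma(t_0)$ only yields local Lipschitz constant $L+\varepsilon$ (not $L$) on a small ball, which is exactly why the slack $\varepsilon$ is built into the definition of $S$.
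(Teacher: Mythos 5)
Your argument is correct but takes a genuinely different route from the paper. The paper reduces to the one-dimensional case and then cites a considerably more general theorem of Saks (Chapter IX, Theorem~4.5 of \emph{Theory of the integral}) on Dini derivates, so that the $n=1$ statement is obtained as a corollary of a classical result rather than argued from scratch. You instead give a direct, self-contained proof: you observe that $L:=\sup_x\Lip_x(f)<\infty$ forces $f$ to be continuous, and you then run a supremum/connectedness argument along the segment from $x$ to $y$, showing that the set $S$ of parameters $t$ at which $|f(\gamma(t))-f(x)|\le(L+\varepsilon)t\|y-x\|$ holds is closed and cannot have a maximum $t_0<1$ without contradicting the pointwise bound $\Lip_{\gamma(t_0)}(f)\le L$. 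Both approaches hinge on the same one-dimensional reduction along segments; yours buys self-containedness at the cost of a short extra argument, whereas the paper's is terser but opaque unless the reader tracks down the Saks reference. Your insertion of the slack $\varepsilon$ into the definition of $S$, which is exactly what is needed to convert the $\limsup$ bound $\Lip_{\gamma(t_0)}(f)\le L$ into a usable local Lipschitz estimate, is the right move and shows you saw where the delicacy lies.
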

\begin{proof}
It suffices to handle the case $n=1$ when it follows,
for example, from the
considerably more general Theorem 4.5 in
\cite[Chapter IX]{saks}.
\end{proof}

The following lemma allows us to modify the functions we are constructing
so that they become smooth on suitable subsets of $\R^n$.
A similar lemma is proved in \cite{ACP},
and in \cite{AM}, although the authors of the latter paper could have used
more general
\cite[Theorem 1]{A} or \cite[Corollary 16]{HJ}.
We need a slightly more technical variant of results from these references.

Recall that for any collection $\cal B$ of open sets in $\R^n$
there is a
$C^\infty$ partition of unity of order $n$ subordinated to it,
that is a collection of $C^\infty$ functions
$\phi_k:\R^n\to[0,1]$, $k=1,2,\dots$, such that
\begin{itemize}
\item
each $\spt(\phi_k)$ is a compact subset of some $B\in\cal B$,
\item
$\sum_k\phi_k(x)=1$ for every $x\in\bigcup\cal B$,
\item
for each $x\in\bigcup\cal B$ there is $r>0$ such that
$B(x,r)\cap\spt(\phi_k)\ne \emptyset$ for at most
$n+1$ values of $k$.
\end{itemize}

\begin{lem}\label{AL}
Suppose $H\subset\R^n$ is open, $g:\R^n\to\R$ is Lipschitz,
$\Phi: H\to\R^n$
and $\xi: H\to[0,\infty)$ are continuous and bounded, and
$\|g'(x)-\Phi(x)\|\le\xi(x)$ for almost every $x\in H$.
Then for every continuous $\omega: H\to(0,\infty)$
there is a Lipschitz function $f:\R^n\to\R$ such that
\begin{enumerate}
\item\label{2z.1}
$f(x)=g(x)$ for $x\notin H\cap\{\xi>0\}$ and
$|f(x)- g(x)| \le \omega(x)$ for $x\in H$;
\item\label{2z.2}
$f\in C^1(H)$ and
$\|f'(x)-\Phi(x)\|\le \xi(x)(1+\omega(x))$ for $x\in H$;
\item\label{2z.3}
$\Lip(f)\le \max(\Lip(g),\sup_{x\in H}(\|\Phi(x)\|+\xi(x)(1+\omega(x))))$.
\end{enumerate}
\end{lem}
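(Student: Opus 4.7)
The plan is a standard mollification-with-partition-of-unity construction that leaves $f=g$ outside $U:=H\cap\{\xi>0\}$ and $C^\infty$-smooths $g$ inside. For each $x\in U$ I would choose $r_x>0$ so small that $\Bc(x,2r_x)\subset U$ and so that on $B(x,2r_x)$ the continuous functions $\Phi,\xi,\omega$ each vary by at most a fixed small fraction of $\xi(x)\omega(x)$. Applying the quoted $C^\infty$ partition-of-unity theorem of order $n$ to $\{B(x,r_x):x\in U\}$ produces bumps $\phi_k\ge 0$ with $\spt\phi_k\subset B_k:=B(x_k,r_k)$, $\sum_k\phi_k\equiv 1$ on $U$, at most $n+1$ bumps nonzero at each point, and (by standard constructions) $\|\phi_k'\|_\infty\le M_n/r_k$ for a dimensional constant $M_n$.

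Next, fix a nonnegative $C^\infty$ kernel $\psi$ supported in $\Bc(0,1)$ with $\int\psi=1$, set $\psi_\eta(y):=\eta^{-n}\psi(y/\eta)$, and define $g_k:=g*\psi_{\eta_k}$ where $\eta_k\in(0,r_k]$ is chosen small enough that
\[
\Lip(g)\,\eta_k\;\le\;\min\Bigl(\frac{\omega(x_k)}{2(n+1)},\ \frac{r_k\,\xi(x_k)(1+\omega(x_k))}{2(n+1)M_n},\ r_k^2\Bigr).
\]
Each $g_k$ is then $C^\infty$ on a neighbourhood of $B_k$ (because $\eta_k\le r_k$ and $\Bc(x_k,2r_k)\subset U$). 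Finally, define
\[
f(x):=g(x)+\sum_k\phi_k(x)\bigl(g_k(x)-g(x)\bigr),
\]
which equals $g$ on $\R^n\setminus U$ and $\sum_k\phi_k g_k$ on $U$.

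For conclusion~\eqref{2z.1}: outside $H\cap\{\xi>0\}$ the sum vanishes; on $U$ at most $n+1$ summands contribute, each of modulus $\le\Lip(g)\eta_k\le\omega(x_k)/(2(n+1))$, giving $|f-g|\le\omega(x_k)\le\omega(x)$. For~\eqref{2z.2}, $f$ is locally a finite sum of products of $C^\infty$ functions, hence $C^\infty(U)$; using $\sum\phi_k\equiv 1$ (so $\sum\phi_k'\equiv 0$) on $U$,
\[
f'(x)-\Phi(x)=\sum_k\phi_k(x)\bigl(g_k'(x)-\Phi(x)\bigr)+\sum_k\phi_k'(x)\bigl(g_k(x)-g(x)\bigr).
\]
The first sum is controlled by writing $g_k'(x)=(g'*\psi_{\eta_k})(x)$ as the average of $g'(z)$ over $z\in B(x,\eta_k)\subset B(x_k,2r_k)$, invoking $\|g'-\Phi\|\le\xi$ a.e.\ and the near-constancy of $\Phi,\xi$ on $B(x_k,2r_k)$; the second is bounded using $\|\phi_k'\|_\infty\le M_n/r_k$ and the middle term in the choice of $\eta_k$. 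This yields $\|f'(x)-\Phi(x)\|\le\xi(x)(1+\omega(x))$.

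For~\eqref{2z.3}, by Lemma~\ref{LL} it suffices to bound $\Lip_x(f)$ pointwise. On $U$, $\Lip_x(f)=\|f'(x)\|\le\|\Phi(x)\|+\xi(x)(1+\omega(x))$. At $x\notin U$, for $y\to x$ with $y\notin U$, $f=g$ locally gives $\Lip_x(f)\le\Lip(g)$; for $y\in U\cap B_k$, the inclusion $\Bc(x_k,2r_k)\subset U$ forces $2r_k\le|x_k-x|\le r_k+|y-x|$, hence $r_k\le|y-x|$, and the third term in the bound on $\eta_k$ gives $|g_k(y)-g(y)|\le r_k^2\le|y-x|^2=o(|y-x|)$, so $\Lip_x(f)\le\Lip(g)$. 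The main technical point is exactly this last estimate: the mollification scale $\eta_k$ must decay \emph{quadratically} in $r_k$, so that the smoothing error $|g_k-g|$ is superlinear in the distance to $\partial U$ and therefore does not inflate the Lipschitz constant at points where $f$ and $g$ agree.
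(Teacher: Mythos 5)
Your construction --- $f=g$ off $U:=H\cap\{\xi>0\}$ and $f=\sum_k\phi_k\,(g*\psi_{\eta_k})$ on $U$ --- is precisely the paper's, and your observation that the mollification scale must decay quadratically in $r_k$ (so that the smoothing error is $o(\rho_U)$) is indeed the device that controls the Lipschitz constant across the boundary of $U$.

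The genuine gap is in conclusion~\eqref{2z.2}: you establish $f\in C^\infty(U)$ and the derivative bound only for $x\in U$, but the statement requires $f\in C^1(H)$ and the bound on all of $H$. At a point $x\in H\setminus U$, where $\xi(x)=0$, the bound forces $f'(x)=\Phi(x)$; since $f=g$ near such $x$ off $U$, one must show $g$ is differentiable at $x$ with $g'(x)=\Phi(x)$. This does follow from the a.e.\ hypothesis $\|g'-\Phi\|\le\xi$ together with continuity of $\Phi$ and $\xi$ (so $g'$ is a.e.\ arbitrarily close to $\Phi(x)$ on small balls about $x$), but it is a separate step that you neither state nor prove: your quadratic estimate only yields $(f-g)'(x)=0$, not that $g'(x)$ exists. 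One then still has to argue continuity of $f'$ at such $x$ --- needed for $C^1(H)$ --- using that $\|f'(y)-\Phi(y)\|\le\xi(y)(1+\omega(y))\to 0$ as $U\ni y\to x$ together with continuity of $\Phi$; this too is omitted. Without these steps the claim $f\in C^1(H)$ is unproved whenever $H\cap\{\xi=0\}\ne\emptyset$, which occurs in the paper's uses of the Lemma. A smaller, fixable quantitative slip: with $\Lip(g)\,\eta_k\le r_k\,\xi(x_k)(1+\omega(x_k))/\bigl(2(n+1)M_n\bigr)$ the correction $\sum_k\phi_k'(g_k-g)$ contributes roughly $\tfrac12\xi(1+\omega)$, so the total bound is about $\xi\bigl(\tfrac32+\tfrac12\omega\bigr)$, which exceeds $\xi(1+\omega)$ wherever $\omega<1$; replacing $1+\omega(x_k)$ by $\omega(x_k)$ in the $\eta_k$ budget (and shrinking the ``fixed small fraction'' governing near-constancy of $\Phi,\xi,\omega$) repairs the arithmetic.
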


\begin{proof}
Let $U:=H\cap\{\xi>0\}$, extend $\xi$ and $\omega$ to
(possibly discontinuous) functions defined on all of $\R^n$
by letting $\xi(x)=\omega(x)=0$ for $x\notin H$ and let
$\omega_0(x):=\tfrac12\min(1,\xi(x)\omega(x),\omega(x),\rho_U^2(x))$.
Let $\mathcal{B}$
be the family of balls $B(x,r)$
such that $x\in U$ and $r<\omega_0(x)$. Choose
$(\phi_k)_{k\ge1}$ forming a locally finite $C^\infty$ partition of unity
on $U$ subordinate to~$\mathcal{B}$, and denote $m_k=1+\|\phi_k'\|_\infty$.

As, for example, in \cite[Appendix C.4]{evans},
let $\eta$ be the standard $C^\infty$-smooth mollifier in~$\R^n$
and define $\eta_s(x):=\eta(x/s)/s^n$. For each $k$
choose $s_k>0$ small enough
so that the convolution $f_k=g *\eta_{s_k}$
satisfies for every $x\in \spt(\phi_k)$,
\begin{itemize}
\item
$|f_k(x)-g(x)|\le 2^{-k-1} m_k^{-1}\omega_0(x)$;
\item
$\|f_k'(x)-\Phi(x)\|\le \xi(x)+\omega_0(x)$.
\end{itemize}

Define $f:\R^n\to\R$ by $f(x)=\sum_{k} f_k(x)\phi_k(x)$ for $x\in U$
and $f(x)=g(x)$ for $x\notin U$.
Since each $f_k\phi_k$ is in $C^1(U)$, we have $f\in C^1(U)$.
Also, for all $x\in\R^n$,
\begin{equation}\label{U.1}
|f(x)-g(x)| \le \omega_0(x)
\end{equation}
since for $x\notin U$ both sides are zero, and for $x\in U$,
\begin{align*}
|f(x)-g(x)|&\le \sum_k |f_k(x)-g(x)|\phi_k(x)
\le\sum_k \omega_0(x)\phi_k(x) \le \omega_0(x).
\end{align*}
Since $\omega_0\le\omega$ and $\omega_0(x)=0$ for $x\notin U$,
\ref{2z.1} holds.

We show that $f$ is differentiable at every $x\in H$ and
\begin{equation}\label{U.2}
\|f'(x)-\Phi(x)\|\le \xi(x)+2\omega_0(x).
\end{equation}
To see this
for $x\in U$, we use $\sum_k \phi_k(x)=1$ and $\sum_k \phi_k'(x)=0$ to infer that
\begin{align*}
f'(x)-\Phi(x)&= \sum_k (f_k'(x)-\Phi(x))\phi_k(x)
+ \sum_k (f_k(x)-g(x))\phi_k'(x),\\
\shortintertext{hence}
\|f'(x)-\Phi(x)\|
&\le \sum_k \|f_k'(x)-\Phi(x)\|\phi_k(x)
+ \sum_k |f_k(x)-g(x)|\|\phi_k'(x)\|\\
&\le \sum_k (\xi(x)+\omega_0(x))\phi_k(x)+\sum_k 2^{-k-1}\omega_0(x)\\
&\le \xi(x)+2\omega_0(x).
\end{align*}
To see \eqref{U.2} for $x\in H\setminus U$, we infer from
the assumptions on $g,\Phi$ and $\xi$ that
$g$ is differentiable at $x$ and $g'(x)=\Phi(x)$.
Since $(f-g)'(x)=0$ because
\eqref{U.1} gives
$|f(y)-g(y)|\le\omega_0(y)\le\rho_U^2(y)\le\|y-x\|^2$
for all $y\in\R^n$,
we get that $f$ is differentiable at $x$ and
$f'(x)=g'(x)=\Phi(x)$.

Clearly, \eqref{U.2} and the inequality
$2\omega_0(x)\le\xi(x)\omega(x)$ show
the second statement of \ref{2z.2}.

To prove \ref{2z.3}, we infer from \eqref{U.1}
that $\Lip_x(f)\le\Lip(g)$ for $x\in\R^n\setminus U$,
and from \eqref{U.2} that
\[\Lip_x(f)\le \sup_{y\in U}(\|\Phi(y)\|+\xi(y)+2\omega_0(y))
\le \sup_{y\in H}(\|\Phi(y)\|+\xi(y)(1+\omega(y)))\]
for $x\in U$. Thus \ref{2z.3} holds by Lemma~\ref{LL}
and, since its right side
is finite, we also see that $f$ is Lipschitz.

We already know that $f$ is differentiable
at every $y\in H$ and $f'$ is continuous at every $y\in U$.
If $y\in H\setminus U$, \eqref{U.2}
shows that
$\lim_{x\to y} (f'(x)-\Phi(x)) = 0$. Since
$\Phi$ is continuous at $y$, it follows that
$f'$ is continuous at $y$.
Hence $f\in C^1(H)$, which is the last statement
we needed to prove.
\end{proof}

The next simple Lemma is used to show that the functions we construct
may be approximated by linear ones in the way required in
equation~\eqref{E1}
of our main result, Theorem~\ref{4}.

\begin{lem}\label{3x}
Suppose that $H\subset\R^n$ is open, $g:\R^n\to\R$ belongs to $C^1(H)$,
$\omega:\R^n\to[0,\infty)$ is continuous and strictly positive on $H$,
and $\eta\in(0,1]$.
Then there is a function $\xi:\R^n\to[0,\infty)$ such that
\begin{enumerate}
\item\label{3x.0}
$\xi\in C(\R^n,[0,\infty))\cap C(H,(0,\infty))$
and $\xi\le\tfrac12\omega$;
\item\label{3x.2}
if $x\in H$ and $h:\R^n\to\R$ satisfies
$|h-g|\le2\xi$, there is $0<r<\omega(x)$
such that $|h(x+y)-h(x)-\spr{g'(x)}{y}|\le\eta r$
whenever $\|y\|\le r$.
\end{enumerate}
\end{lem}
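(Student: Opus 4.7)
The plan is to construct $\xi$ as a small multiple of a $1$-Lipschitz ``modulus-of-differentiability'' function $\tau$, truncated by $\omega/2$. The heart of the argument is to exhibit a positive continuous function on $H$ that controls, pointwise, the radius on which the first-order Taylor approximation to $g$ is accurate enough, and then regularize it.

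For $x\in H$, I would define
\[
\Delta(x):=\sup\bigl\{r\in(0,1]:B(x,r)\subset H\text{ and }|g(x+y)-g(x)-\spr{g'(x)}{y}|\le\tfrac{\eta}{2}\|y\|\ \forall\,\|y\|\le r\bigr\},
\]
which is positive on $H$ by differentiability of $g$. The key technical step is that $\Delta$ is \emph{locally bounded below} on $H$: given $x_0\in H$, pick $\rho>0$ with $\overline{B(x_0,\rho)}\subset H$; since $g\in C^1(H)$, $g'$ is uniformly continuous on this compact ball, so there is $\alpha\in(0,\rho/2]$ with $\|g'(w)-g'(w')\|\le\eta/2$ whenever $w,w'\in\overline{B(x_0,\rho)}$ and $\|w-w'\|\le\alpha$. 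A standard mean-value estimate then yields $\Delta(x)\ge\alpha$ for every $x\in B(x_0,\rho/2)$. Set $\delta(x):=\tfrac14\min(1,\Delta(x),\omega(x))$ on $H$ and $\delta(x):=+\infty$ off $H$; this $\delta$ is strictly positive and locally bounded away from $0$ on $H$.

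Next, I would apply the standard inf-convolution
\[
\tau(x):=\inf_{z\in\R^n}\bigl(\delta(z)+\|x-z\|\bigr),
\]
which is $1$-Lipschitz on $\R^n$ (as infimum of $1$-Lipschitz functions), satisfies $\tau(x)\le\delta(x)$ on $H$ (take $z=x$), and is strictly positive on $H$: for $x\in H$ with $\delta\ge c>0$ on a neighbourhood $U$ of $x$, one has $\tau(x)\ge\min(c,\dist(x,\R^n\setminus U))>0$. Define $\xi(x):=\min\bigl(\omega(x)/2,\,\eta\tau(x)/16\bigr)$; continuity of $\omega$ and $\tau$ makes $\xi$ continuous on $\R^n$, and $\xi$ is strictly positive on $H$ and bounded by $\omega/2$, giving \ref{3x.0}.

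To verify \ref{3x.2}, fix $x\in H$ and set $r:=\tau(x)>0$. Then $r\le\delta(x)\le\omega(x)/4<\omega(x)$ and $r\le\Delta(x)$, so $|g(x+y)-g(x)-\spr{g'(x)}{y}|\le(\eta/2)r$ for all $\|y\|\le r$. The $1$-Lipschitz bound on $\tau$ gives $\tau\le 2r$ on $B(x,r)$, hence $\xi\le\eta r/8$ there and $\xi(x)\le\eta r/16$. For any $h$ with $|h-g|\le 2\xi$ and $\|y\|\le r$,
\[
|h(x+y)-h(x)-\spr{g'(x)}{y}|\le 2\xi(x+y)+2\xi(x)+|g(x+y)-g(x)-\spr{g'(x)}{y}|\le \tfrac{\eta r}{4}+\tfrac{\eta r}{8}+\tfrac{\eta r}{2}\le\eta r.
\]
The main obstacle is the local lower bound on $\Delta$: this is exactly where the $C^1(H)$ assumption (as opposed to mere pointwise differentiability) is used, through uniform continuity of $g'$ on compact subsets of $H$. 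Once that is in hand, the inf-convolution provides the required continuous (indeed $1$-Lipschitz) positive minorant of $\delta$, and the final estimate is routine bookkeeping.
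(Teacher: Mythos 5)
Your proof is correct and follows essentially the same strategy as the paper: construct a $1$-Lipschitz function that is strictly positive on $H$ and bounds from below the radius on which the first-order Taylor approximation of $g$ is accurate to within $\eta/2$, then take $\xi$ to be a small multiple of $\eta$ times this function. The paper manufactures this function as the pointwise supremum of an admissible family $\Psi$ of $1$-Lipschitz functions, whereas you use the inf-convolution of a raw radius function $\delta$ with $\|\cdot\|$; these are two standard and essentially interchangeable devices for producing a largest $1$-Lipschitz minorant, and the remaining estimates are the same bookkeeping.
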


\begin{proof}
Let $\Psi$ be the set of functions
$\psi:\R^n\to[0,\infty)$ satisfying $\Lip(\psi)\le 1$,
$0\le \psi\le\tfrac12\min(\rho_H,\omega,1)$,
and $\|g'(y)-g'(z)\|\le\tfrac12\eta$
whenever $x\in H$ and $\max(\|y-x\|,\|z-x\|)<\psi(x)$.
Since $0\in\Psi$, $\phi(x):=\sup\{\psi(x): \psi\in\Psi\}$
is well-defined. We also have $\phi\in\Psi$ since for any
$x,y,z$ satisfying $x\in H$ and $\max(\|y-x\|,\|z-x\|)<\phi(x)$
there is $\psi\in\Psi$ such that
$\max(\|y-x\|,\|z-x\|)<\psi(x)$ and hence
$\|g'(y)-g'(z)\|\le\tfrac12\eta$.

Let $x\in H$. Since both $\rho_H$ and $\omega$ are continuous and
strictly positive at $x$, there is $\eps>0$ such that
$\tfrac12\min(\rho_H,\omega,1)>\eps$ on $B(x,\eps)$.
Then the function $\psi_{\eps,x}(y):=\max(0,\eps-\|y-x\|)$
satisfies $\psi_{\eps,x}=0$ outside $B(x,\eps)$ and
$0\le\psi_{\eps,x}(y)\le \eps \le \tfrac12\min(\rho_H(y),\omega(y),1)$
for $y\in B(x,\eps)$.
Hence $\psi_{\eps,x}$ belongs to $\Psi$ and
we infer that
$\phi(x)\ge\psi_{\eps,x}(x)=\eps>0$.
Consequently, $\phi$ is strictly positive on $H$.
Furthermore,
\[|g(x+y)-g(x)-\spr{g'(x)}{y}|\le\|y\|\sup_{z\in B(x,\|y\|)} \|g'(z)-g'(x)\|
\le \tfrac12\eta\|y\|\]
whenever $x\in H$ and $\|y\|< \phi(x)$.

Letting $\xi(x):=\tfrac1{12}\eta\phi(x)$, we see that
\ref{3x.0} holds. To prove \ref{3x.2}, given $x\in H$, we let
$r:=\phi(x)$, observe that $0<r<\omega(x)$ and use that
$\Lip(\xi)\le\tfrac1{12}\eta$ and $\xi(x)=\frac1{12}\eta r$ to estimate
\begin{align*}
|h(x+y)-h(x)&-\spr{g'(x)}{y}|\\
&\le
2\xi(x+y)+2\xi(x) +|g(x+y)-g(x)-\spr{g'(x)}{y}|\\
&\le
4\xi(x) + 2\Lip(\xi)\|y\|+ \tfrac12\eta\|y\|\\
&\le \tfrac13 \eta r+\tfrac16\eta\|y\|+\tfrac12\eta\|y\|
\le \eta r
\end{align*}
whenever $\|y\| < r =\phi(x)$, and so whenever $\|y\|\le\xi(x)$.
\end{proof}

The following Lemmas~\ref{1} and \ref{2} modify corresponding lemmas
from \cite{ACP} in a way suitable for our applications.
A special version of Lemma~\ref{1}, which
does not suffice for our purposes, can be found also in
\cite[Lemmas~4.12--4.14]{AM}. Since \cite{ACP} is not
yet available, we provide full proofs.

\begin{lem}\label{1}
Given $\eps>0$ there is $\taux\in(0,1)$ such that the following holds.
For every $E\subset\R^n$, every unit vector $e\in\R^n$
such that $w_{e,\taux}(E)=0$
and every
continuous $\omega:\R^n\to[0,\infty)$ which is strictly positive on $E$,
there is a Lipschitz function $g:\R^n\to\R$ such that
$0\le g\le \omega$,
$\Lip(g)\le 1+\eps$
and there is an open set $H\supset E$ contained in $\{\omega>0\}$ such that
$\|g'(x)-e\|\le \eps$ for
Lebesgue almost all $x\in H$.
\end{lem}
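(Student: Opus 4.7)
Plan. Fix $\taux = c\eps^2$ for a small absolute constant $c$ to be determined below. Given $E$, $e$ and $\omega$: by standard localization (dyadic decomposition of $E$ according to the level sets of $\omega$), it is enough to handle the case where $\omega \ge c_1 > 0$ on an open neighbourhood of $E$. Using $w_{e,\taux}(E) = 0$, pick an open $G$ with $E \subset G \subset \{\omega > c_1/2\}$ and $w_{e,\taux}(G) < \delta$, where $\delta > 0$ is chosen small in terms of $\eps c_1$.

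The heart of the proof is the construction of a Lipschitz preliminary function $g_0$ on $\R^n$ with values in $[0,\delta]$, vanishing outside a controlled enlargement $G' \supset G$ (itself of width $O(\delta)$), which is monotone nondecreasing along $C_{e,\taux}$-directions and satisfies $\partial_e g_0 = 1$ a.e.\ on $G$. A natural starting point is the cone-based ``trace''
\[
g_0(x) := \sup\Bigl\{\int_{-\infty}^{0} \spr{e}{\gamma'(t)}\,\1_G(\gamma(t))\,dt \;:\; \gamma\in\Gamma_{e,\taux},\ \|\gamma'(t)\|\le 1,\ \gamma(0)=x\Bigr\},
\]
which immediately satisfies the monotonicity, the bound $g_0 \le w_{e,\taux}(G) < \delta$, and---by concatenating near-optimal curves with straight $C_{e,\taux}$-segments inside $G$---the lower bound $\partial_e g_0 \ge 1$ on $G$. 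The matching upper bound $\partial_e g_0 \le 1$ follows by applying the width estimate to the straight line $t \mapsto x + te$, whose total arc length in $G$ is strictly less than $\delta$.

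The principal obstacle is that the raw $g_0$ above need not be globally Lipschitz near $\partial G$, where its value can scale like $\sqrt{\dist(x,\partial G)}$ in directions where cone-admissible curves first enter $G$; a regularization is required that replaces $G$ in the supremum by a suitable enlargement $G' \supset G$ with $w_{e,\taux}(G') < 2\delta$ and a ``rounded'' boundary compatible with the cone structure, so that the regularized function becomes genuinely $(1+O(\taux))$-Lipschitz. Once $g_0$ is Lipschitz, the monotonicity along $C_{e,\taux}$ forces $g_0'$ at each differentiability point to lie in the dual cone $C_{e,\sqrt{1-\taux^2}}$; combined with $\partial_e g_0 = 1$ a.e.\ on $G$ this yields $\|g_0'\| \le 1/\sqrt{1-\taux^2}$ and hence $\|g_0' - e\| = O(\taux)$ a.e.\ on $G$. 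A careful choice of $c$ in $\taux = c\eps^2$ then produces $\Lip(g_0) \le 1 + \eps/2$ and $\|g_0' - e\| \le \eps/2$ a.e.\ on $G$.

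Finally, apply Lemma~\ref{AL} to $g_0$ with $\Phi \equiv e$, open set $H$ with $E \subset H \subset G'$, continuous $\xi$ on $H$ majorising $\|g_0' - e\|$ and bounded by $\eps/2$, and the positive weight of Lemma~\ref{AL} chosen so that $|f - g_0| \le \omega/2$. The resulting $f \in C^1(H)$ satisfies $\|f' - e\| \le \eps$ on $H$, $\Lip(f) \le 1 + \eps$ globally, and $0 \le f \le \omega$ provided $\delta$ was chosen small enough relative to $c_1$; unwinding the initial dyadic localization and summing the resulting pieces with appropriate weights then delivers the function required by the lemma.
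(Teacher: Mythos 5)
Your plan correctly identifies the cone-based trace as the right building block and correctly diagnoses that the raw supremum
\[
g_0(x)=\sup\Bigl\{\textstyle\int_{-\infty}^{0}\spr{e}{\gamma'(t)}\,\1_G(\gamma(t))\,dt:\gamma\in\Gamma_{e,\taux},\ \gamma(0)=x\Bigr\}
\]
can fail to be Lipschitz near $\partial G$. But the fix you propose --- replacing $G$ by a geometrically ``rounded'' enlargement $G'$ --- does not address the actual source of the failure, and no such regularization of the domain can. The obstruction lives in the directions perpendicular to $e$: for $x$ at the top of a thin slab $G$ of height $\delta$ and $x'=x+a\hat y$ with $\hat y\perp e$ and $a$ small, any admissible curve ending at $x'$ must already have exited $G$ at height at most $\delta-a\cot\beta$, so $g_0(x)-g_0(x')\gtrsim a/\taux$ while $\|x-x'\|=a$; the Lipschitz constant is of order $1/\taux$ regardless of how smooth $\partial G$ is. Shrinking $\delta$ (i.e., making $w_{e,\taux}(G)$ small) does not help, because the ratio is scale-invariant.

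The paper's proof resolves this with a qualitatively different device, which is the key idea of the lemma and is absent from your proposal: the supremum is taken over curves that are allowed to \emph{overshoot} the target $x$, ending at $\gamma(b)=x+se$ for any $s\ge 0$, at the cost of a penalty $-s$. With this penalty, a near-optimal curve for $x$ can be extended by a short admissible segment of slope $\cot\beta$ to reach the ray $\{(x+y)+se:s\ge0\}$, and the $-s$ term converts the resulting overshoot into a loss of at most $\|y\|\tan\beta$, giving $|g_k(x+y)-g_k(x)|\le\|y\|\tan\beta$ for $y\perp e$ and hence $\Lip(g_k)\le 1+\tan\beta$ directly, with no boundary regularity on $G_k$ required. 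Without this overshoot-and-penalize mechanism your argument has a genuine gap; the rest of the outline (choice of $\taux$ in terms of $\eps$, localization via a partition of unity subordinate to $\{\omega>0\}$, and the $C^1$-smoothing via Lemma~\ref{AL}) is in the right spirit, though the paper handles $\omega$ by summing $g_k\phi_k$ with $w_{e,\taux}(G_k)<\eps_k$ rather than by a dyadic reduction to $\omega\ge c_1$.
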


\begin{proof}
Let $\taux=\sin\beta$, where $0<\beta<\pi/2$, be such that $\tan\beta<\eps/2$.
Denote $G:=\nobreak\{x: \omega(x)>0\}$ and choose $\phi_k\in C^\infty(\R^n)$, $k\ge1$, with
compact support contained
in $G$ that form
a locally finite partition of unity on $G$.
Let $\eps_k>0$ be such that
$\sum_k \eps_k\|\phi_k'\|<\eps/2$ and
$\eps_k\phi_k(x)\le 2^{-k} \min(1,\rho_G^2(x),\omega(x))$ for each $k\ge1$ and all $x\in\R^n$.

Using values $\eps_k$ which we have just defined, find open sets $G_k$ such that $G\supset G_k\supset E$ and $w_{e,\taux}(G_k)<\eps_k$.
For each $x\in\R^n$ we put
\begin{equation}\label{g}
g_k(x) := \sup \Bigl\{\H^1\bigl(G_k\cap \gamma(-\infty,b]\bigr)-s :
\gamma\in\Gamma_{e,\taux},\,s \ge 0,\,\gamma(b)=x+se\Bigr\}
\end{equation}
and show that
\begin{enumerate}
\item\label{1a.1}
$0 \le g_k(x) \le \eps_k$;
\item\label{1a.1a}
$|g_k(x+y)-g_k(x)|\le\|y\|\tan\beta$ when $y$ is perpendicular to $e$;
\item\label{1a.1b}
$g_k(x)\le g_k(x+re)\le g_k(x)+r$ for every $r>0$;
\item\label{1a.1c}
$g_k(x + re) = g_k(x) + r$ when $[x, x + re]\subset G_k$;
\item\label{1a.2}
$g_k$ is a Lipschitz function and $\Lip(g_k)\le 1+\tan\beta$;
\item\label{1a.3}
$\|g_k'(x) -e\|\le\tan\beta$ for almost every $x\in G_k$.
\end{enumerate}

The first inequality in \ref{1a.1}
is obvious by considering in \eqref{g},
$s=0$ and any $\gamma\in\Gamma_{e,\taux}$ with $\gamma(b)=x$,
and the second is immediate from $w_{e,\taux}(G_k)<\eps_k$.

If $y\ne 0$ is orthogonal to $e$, and $\gamma$, $b$, $s$ come from \eqref{g},
we let $r:=\|y\|$ and $\hat y:=y/r$ and redefine $\gamma$
on $(b,\infty)$ by $\gamma(b+t)=\gamma(b)+ (t\cot\beta)\hat y+te$ for $t>0$.
Using \eqref{g} for~$g_k(x+y)$ with $b':=b+r\tan\beta$ and $s':= s+ r\tan\beta$,
we get
\[g_k(x+y)\ge g_k(x) -r\tan\beta=g_k(x) -\|y\|\tan\beta.\]
To get a lower estimate for $g_k(x)$ apply the above to the vector $-y$ added to $x+y$: \[g_k(x)=g_k(x+y-y)\ge g_k(x+y)-\|y\|\tan\beta.\] This verifies \ref{1a.1a}.

Now consider $x'=x+re$ where $r>0$. Since any $\gamma$ used for $x'$ may be used for $x$ with $\gamma(b)=x+(r+s)e$,
we get $g_k(x)\ge g_k(x')-r$. For the rest of \ref{1a.1b} and for \ref{1a.1c},
note that as any $\gamma$ used in \eqref{g}
for $x$ may be redefined by letting $\gamma(b+t)=x+se+te$ for $t\ge 0$,
we get
\[g_k(x')\ge \H^1(G_k\cap \gamma(-\infty,b+r])-s\ge \H^1(G_k\cap \gamma(-\infty,b])-s\]
for all $\gamma$ satisfying \eqref{g}, so $g_k(x')\ge g_k(x)$, and this verifies \ref{1a.1b}.
If $[x,x']=[x,x+re]\subset G_k$ and $r\le s$, the same argument shows that
\[g_k(x')\ge \H^1(G_k\cap \gamma(-\infty,b+s])-(s-r)\ge\bigl(\H^1(G_k\cap \gamma(-\infty,b])-s\bigr)+r,\]
and if $r>s$, then
\begin{align*}
g_k(x')\ge \H^1(G_k\cap \gamma(-\infty,b+r])
&= \H^1(G_k\cap \gamma(-\infty,b+s]) +(r-s)\\
&\ge\bigl(\H^1(G_k\cap \gamma(-\infty,b])-s\bigr)+r
\end{align*}
for all such $\gamma$. Hence in both cases $g_k(x')\ge g_k(x)+r$, which, together with \ref{1a.1b}, implies equality in \ref{1a.1c}.

The statements \ref{1a.1a}--\ref{1a.1c}
imply that $g_k$ is Lipschitz and for almost every~$x$,
$0\le Dg_k(x;e)\le 1$, the equality $Dg_k(x;e)=1$ is satisfied for $x\in G_k$ and
$|Dg_k(x;y)|\le \|y\|\tan\beta$ for $y$ perpendicular to $e$.
This gives both \ref{1a.2} and~\ref{1a.3}.

Let $\displaystyle g:=\sum_{k=1}^\infty g_k\phi_k$. Since
by \ref{1a.1} one has $0\le g_k\phi_k\le 2^{-k}\min(1,\rho_G^2,\omega)$ for every $k\ge1$,
we conclude that $0\le g\le \omega$ and $\Lip_x(g)=0$ for $x\notin G$.
Since the sum defining $g$ is locally finite,
$g$ is locally Lipschitz on $G$
and by~\ref{1a.2} and \ref{1a.1} for almost every $x\in G$,
\[\|g'(x)\|
\le \sum_k \|g_k'(x)\|\phi_k(x) + \sum_k g_k(x)\|\phi_k'(x)\|
\le 1+\tan\beta + \sum_k \eps_k \|\phi_k'\| \le 1+\eps.\]
Hence $\Lip_x(g)\le 1+\eps$ for every $x\in G$,
and we infer from Lemma~\ref{LL}
that $\Lip(g)\le 1+\eps$.

Let $H :=\bigcap_k U_k$, where $U_k:=(G\setminus\spt(\phi_k))\cup G_k$ are open.
Then $E\subset\bigcap_k G_k\subset H\subset G$ and $H$ is open because
the complements of the $U_k$ in $G$ are closed in $G$
and their collection is
locally finite in $G$ since $G\setminus U_k\subset\spt(\phi_k)$. Finally, by \ref{1a.3}
for almost every $x\in H$,
\[\|g'(x)-e\|
\le \sum_k \|g_k'(x)-e\|\phi_k(x) + \sum_k g_k(x)\|\phi_k'(x)\|
\le \tan\beta + \sum_k \eps_k \|\phi_k'\| <\eps.\qedhere\]
\end{proof}

\begin{dfn}\label{DF.T}
Since we will need to use Lemma~\ref{1}
for several values of $\eps$ at the same time,
we introduce a function $\taux:(0,\infty)\to(0,\infty)$
such that $\taux(\sigma)$ is the value of $\taux$
from Lemma~\ref{1} for $\eps=\tfrac17\sigma$.
\end{dfn}

\begin{lem}\label{2}
Suppose $E\subset\R^n$,
the functions $\omega:\R^n\to[0,\infty)$ and $\phi:\R^n\to[0,1]$ are continuous,
$\omega>0$ on $E$,
$e\in\R^n$, $\sigma>0$ and $w_{e,\taux(\sigma)}(E\cap \{\phi>0\})=0$.
Then there exist functions $f:\R^n\to\R$ and $\psi:\R^n\to[0,1]$ and an open set $H\subset\R^n$ such that
\begin{enumerate}
\item\label{2.1}
$E\subset H\subset\{x:\omega(x)>0\}$ and
$f\in \Lips(\R^n)\cap C^1(H)$;
\item\label{2.2}
$|f(x)|\le\omega(x)\|e\|$ for all $x\in\R^n$ and $f(x)=0$ when $\phi(x)=0$;
\item\label{2.3}
$\|f'(x)-\psi(x)e\|\le\sigma\1_{\{\omega>0\}}(x)\1_{\{\phi>0\}}(x)\|e\|$
for almost all~$x\in\R^n$;
\item\label{2.4}
$0\le\psi(x)\le\phi(x)\1_{\{\omega>0\}}(x)$ for $x\in\R^n$ and
$\psi(x)=\phi(x)$ for $x\in H$.
\end{enumerate}
\end{lem}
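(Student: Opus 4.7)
The plan is to decompose $\{\phi>0\}$ into overlapping level sets on which $\phi$ is approximately constant, apply Lemma~\ref{1} on each level, and glue the pieces by a partition of unity, so that the resulting function's gradient is approximately $\phi(x)e$. First dispose of the trivial case $e=0$ (take $f\equiv 0$, $\psi\equiv 0$, $H=\{\omega>0\}$) and assume $e\ne 0$, setting $e_0=e/\|e\|$; since cones depend only on direction, $w_{e_0,\taux(\sigma)}(E\cap\{\phi>0\})=0$.

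Let $\delta=\sigma/100$, $c_j=(j-\tfrac12)\delta$, and $V_j=\{\phi\in((j-1)\delta,(j+1)\delta)\}\cap\{\omega>0\}$ for $j\ge1$, and fix a locally finite $C^\infty$ partition of unity $(\chi_j)$ subordinate to $(V_j)$ with $\sum_j\chi_j=1$ on $\{\phi>0\}\cap\{\omega>0\}$. Each $V_j\subset\{\phi>0\}\cap\{\omega>0\}$ and $|\phi-c_j|<\delta$ on $V_j$. For each $j$, apply Lemma~\ref{1} to $E\cap V_j$ (whose $(e_0,\taux(\sigma))$-width is zero by monotonicity, since $E\cap V_j\subset E\cap\{\phi>0\}$) with unit vector $e_0$, parameter $\eps=\sigma/7$, and a continuous $\omega_j\ge 0$ strictly positive on $E\cap V_j$ satisfying
\[\omega_j(x)\le 2^{-j}\min\Bigl(\omega(x),\,\rho_{\{\omega>0\}\cap\{\phi>0\}}(x)^2,\,\tfrac{\delta}{(1+\|\chi_j'\|_\infty)(1+c_j)\|e\|}\Bigr).\]
Lemma~\ref{1} yields a Lipschitz $g_j$ with $0\le g_j\le\omega_j$, $\Lip(g_j)\le 1+\sigma/7$, and an open $H_j^0\supset E\cap V_j$ in $\{\omega_j>0\}$ on which $\|g_j'-e_0\|\le\sigma/7$ almost everywhere; feeding $g_j$ into Lemma~\ref{AL} upgrades each to $\widetilde g_j\in\Lips(\R^n)\cap C^1(H_j^0)$ with the same bounds up to small correction.

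Set $f(x):=\|e\|\sum_j c_j\chi_j(x)\widetilde g_j(x)$ and $\psi(x):=\phi(x)\1_H(x)$ for $H$ to be chosen. The sum is locally finite, so $f$ is Lipschitz; $|f|\le\|e\|\omega$ by the first bound on $\omega_j$; $f\equiv 0$ on $\{\omega=0\}\cup\{\phi=0\}$ because $\spt\chi_j\subset\{\omega>0\}\cap\{\phi>0\}$; and $|f(x)|\le\|e\|\rho_{\{\omega>0\}\cap\{\phi>0\}}(x)^2$ everywhere by the second bound. Decomposing $f'=\|e\|\sum_j c_j(\chi_j'\widetilde g_j+\chi_j\widetilde g_j')$, using $\widetilde g_j'\approx e_0$ on $H_j^0$, $\sum_j c_j\chi_j\approx\phi$ with error~$\le\delta$, and the third bound on $\omega_j$ to control the $\chi_j'$-cross terms, one obtains $\|f'-\phi e\|\le\sigma\|e\|$ almost everywhere on the open set $H_0:=\bigcap_j\bigl((\R^n\setminus\spt\chi_j)\cup H_j^0\bigr)\supset E\cap\{\phi>0\}$. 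Set $H:=H_0\cup W$ for an open neighborhood $W\supset E\cap\{\phi=0\}$ in $\{\omega>0\}$; on $W$, $f\in C^1$ with $f'\equiv 0$ on $W\cap\{\phi=0\}$, and continuity of $f'$ at points $x\in E\cap\{\phi=0\}$ follows from the quadratic-decay bound $|f|=O(\rho_{\{\omega>0\}\cap\{\phi>0\}}^2)$ (forcing $f'(x)=0$) together with $\phi(y)\to 0$ as $y\to x$ through $\{\phi>0\}$.

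The main obstacle is coordinating the $\omega_j$ so that all three constraints above hold simultaneously across all~$j$. The bound $\omega_j\le 2^{-j}\rho^2$ forces the quadratic decay that yields $f\in C^1$ at points of $E\cap\{\phi=0\}$; the bound $\omega_j\le 2^{-j}\delta/((1+\|\chi_j'\|_\infty)(1+c_j)\|e\|)$ controls the partition-of-unity cross terms in $f'$ so that their total contribution is at most $\sigma\|e\|/3$; and $\omega_j\le 2^{-j}\omega$ delivers $|f|\le\omega\|e\|$. Once these dyadic smallness conditions are imposed, verification of the four conclusions \ref{2.1}--\ref{2.4} is routine.
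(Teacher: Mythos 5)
The decomposition you use --- a partition of unity $(\chi_j)$ over the bands $V_j=\{\phi\in((j-1)\delta,(j+1)\delta)\}\cap\{\omega>0\}$ with $f=\|e\|\sum_j c_j\chi_j\widetilde g_j$ --- is genuinely different from the paper's, which builds $g=\tfrac1k\sum_{i=1}^k g_i$ over the \emph{nested} sets $G_i=H_{i-1}\cap\{\phi>i/k\}$ and applies Lemma~\ref{AL} once, at the very end. Unfortunately your argument, as written, fails to establish conclusions~\ref{2.1} and~\ref{2.3}, and the failure is structural rather than a matter of bookkeeping.

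For \ref{2.3}: you only verify $\|f'-\phi e\|\le\sigma\|e\|$ a.e.\ on $H_0$, whereas the Lemma requires the bound a.e.\ on $\R^n$. Take a.e.\ $x\in(\spt\chi_{j_0}\setminus H_{j_0}^0)\cap\{\phi>0\}\cap\{\omega>0\}$ for some~$j_0$; this set is in general of positive measure. Lemma~\ref{1} constrains $\widetilde g_{j_0}'(x)$ there only by the Lipschitz bound, in an arbitrary direction, so $f'(x)$ contains the term $\|e\|\,c_{j_0}\chi_{j_0}(x)\,\widetilde g_{j_0}'(x)$ of magnitude up to roughly $\|e\|\,c_{j_0}\chi_{j_0}(x)$, which can vastly exceed $\sigma\|e\|$. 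Worse, since this term may point perpendicular to $e$, \emph{no} choice of $\psi(x)\in[0,\phi(x)]$ can repair~\ref{2.3} at such $x$ --- in particular $\psi=\phi\1_H$ does not. In the paper's construction the single ``boundary'' function $g_j$ at each point contributes at most $\Lip(g_j)/k\approx 2/k\le\sigma/3$ to $g'$: the factor $\tfrac1k$ is precisely the damping your weighted sum lacks.

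For \ref{2.1}: $f\in C^1(H)$ fails at points $x_0\in E\cap\{\phi=0\}$ (which your $H=H_0\cup W$ must contain). On $\{0<\phi\le\delta\}\cap\{\omega>0\}$ only $\chi_1$ is nonzero, and there $\chi_1\equiv 1$, so $f'=\|e\|\,c_1\,\widetilde g_1'$ has norm $\approx\|e\|\delta/2$, which does \emph{not} tend to $0$ as $y\to x_0$; yet the quadratic decay forces $f'(x_0)=0$. Your appeal to ``$\phi(y)\to 0$'' would need the error $\|f'-\phi e\|$ to vanish as $\phi\to 0$, but what you prove is the uniform bound $\sigma\|e\|$. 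The paper avoids this by applying Lemma~\ref{AL} once at the end with $\Phi=\psi e$ and $\xi\propto\min(1/k,\phi)$, so that the derivative error is $O(\phi)$ near $\{\phi=0\}$ and continuity of $f'$ there follows. Applying Lemma~\ref{AL} to each $g_j$ separately, as you do, does not recover this.

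Both defects trace to the same cause: weighting each $g_j$ by the full height $c_j$ rather than distributing the height as a telescoping sum of $O(1/k)$ increments. To salvage the approach you would need to replace the weights $c_j$ by a staircase of small increments and carry a $\psi$ that tracks, at each $x$, which increments are ``active'' --- which is, in effect, the paper's construction.
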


\begin{proof}
If $e=0$ or $\sigma\ge 1$,
it suffices to let $f:=0$, $\psi:=\phi$ and $H:=\{\omega>0\}$.
So we assume $\|e\|=1$ and $\sigma<1$, let $\eps:=\sigma/7$ and
pick an integer $k\in[6/\sigma,7/\sigma]$.

Let $\omega_0:=\tfrac12\min(1,\omega)$,
$G_0:=\{\omega>0\}$, $H_0:=G_0\cap\{\phi>0\}$
and, whenever $H_{i-1}$ has been defined for some $i=1,\dots,k$,
let $G_i:=H_{i-1}\cap \{\phi>i/k\}$ and use Lemma~\ref{1} with continuous $\omega_i(x)=\frac12\min(\omega,\rho_{G_i}^2)$, where $\rho_{G_i}$ is defined by \eqref{dist-fun},
to find a Lipschitz function $g_i:\R^n\to\R$ and
nested open sets $H_i\subset G_i\subset H_{i-1}$ such that for each $1\le i\le k$,
\begin{enumerate}[label=(\alph*)]
\item\label{2p.2}
$\Lip(g_i)\le 1+\eps$ and $|g_i|\le \frac12\min(\omega,\rho^2_{G_i})$;
\item\label{2p.3}
$G_i\supset H_{i}\supset G_i \cap E$
and $\|g_i'(x)-e\|\le\eps$ for a.e.~$x\in H_{i}$.
\end{enumerate}

Let $g:=\frac1k\sum_{i=1}^{k} g_i$. Then by \ref{2p.2},
$\Lip(g)\le 1+\eps$
and $|g|\le \tfrac12\min(\omega,\rho_{G_1}^2)$.
For any $x\in G_0$ find the biggest $j=j(x)\in\{0,1,\dots,k\}$
with $x\in G_j$; since $G_k=\emptyset$, we have $j(x)\le k-1$.
Define $\psi(x)=\min((j(x)+2)/k,\phi(x))$;
and for $x\notin G_0$ let
$\psi(x)=0$. Clearly, $0\le\psi\le\phi\1_{G_0}$ on $\R^n$,
which is the first statement of \ref{2.4}.
For any $x\in H_0$ it holds $\psi(x)\in\Bigl(\frac{j(x)}k,\frac{j(x)+2}{k}\bigr]$,
i.e.\ $0<\psi(x)-j(x)/k\le 2/k$. Define now
\[H=\bigcup_{j=0}^k \{x \in H_j: \phi(x)<(j+2)/k\}\]
and notice that $\psi(x)=\phi(x)$ whenever
$x\in H$. Indeed, if $x \in H_j$ is such that $\phi(x)<(j+2)/k$, then $H_j\subset G_j$ implies $j(x)\ge j$, so $\frac{j(x)+2}{k}\ge\frac{j+2}{k}>\phi(x)$, hence by definition $\psi(x)=\phi(x)$, and this verifies~\ref{2.4}.
Also, $E\subset H$ since $E\subset \bigcup_{j=0}^{k-1}(H_j\setminus G_{j+1})$ from \ref{2p.3}, and for $x\in H_j\setminus G_{j+1}$ we have $j(x)=j$ and so
$\phi(x)\le(j+1)/k<(j+2)/k$. Since it is clear that $H$ is open and $\omega>0$ on $H$ because $H\subset G_0$, we conclude that the first part of \ref{2.1} is satisfied for $E$, $H$ and $\omega$. We are now left to define the Lipschitz function $f$ and verify the remaining part of \ref{2.1}, and also \ref{2.2} and \ref{2.3}.

Note that for almost all $x\in G_1$ (where $\phi>1/k$), all $g_i$ are differentiable at $x$ and the estimate in \ref{2p.3} is satisfied whenever $x\in H_i$ and $1\le i\le k$. Consider any such $x\in G_1$.
To estimate $g'(x)$, notice that for such $x$ we have $j=j(x)\ge1$ and
\begin{itemize}
\item
if $1\le i<j$, then $x\in H_i$ and so $\|g_i'(x)- e\|\le \eps$ by \ref{2p.3};
\item
if $i\ge j+1$, then $x\notin G_i$ and so $g_i'(x)=0$ by \ref{2p.2}.
\end{itemize}
Hence, for almost all $x\in G_1$
\begin{align*}
\|g'(x)-\psi(x)e\|
&\le\|g'(x)-\tfrac{j-1}{k}e\|+\tfrac3k\|e\|\\
&\le\tfrac1k(\sum_{i=1}^{j-1}\|g_i'(x)-e\| +\|g_j'(x)\|)+\tfrac3k\\
&\le\eps +\tfrac4k\le \tfrac5k\le5\phi(x).
\end{align*}
Since $g'(x)=0$ outside $G_1$, we get
$\|g'(x)-\psi(x)e\|=\psi(x)$ for $x\notin G_1$.
Using that $\psi=0$ outside $H_0$ and $\psi\le\phi\le\tfrac1k$
for $x\in H_0\setminus G_1$
we infer that $\|g'(x)-\psi(x)e\|\le \min(\tfrac1k,\phi(x))$ outside $G_1$
and conclude
$\|g'(x)-\psi(x)e\|\le 5\min(\tfrac1k,\phi(x))$
for almost all $x\in\R^n$.

Using Lemma~\ref{AL}
with $\Phi(x):=\psi(x)e$, $\xi(x):=5\min(\tfrac1k,\phi(x))$
and $\hat\omega(x)=\tfrac15\min(\omega(x),\phi(x),\rho_H^2)$,
we find Lipschitz $f:\R^n\to\R$ such that $f\in C^1(H)$,
$|f(x)- g(x)| \le \hat\omega(x)$ and
$\|f'(x)-\psi(x)e\|\le \xi(x)(1+\hat\omega(x))$
for all $x\in H$.
Since $f\in C^1(H)$, the remaining condition of \ref{2.1} is satisfied.
Finally, the conditions \ref{2.2} and \ref{2.3} hold since
$|f|\le |f-g|+|g|\le\tfrac15\min(\omega,\phi)+\frac12\min(\omega,\rho_{G_1}^2)
\le\omega\1_{\{\phi>0\}}$,
and $\|f'(x)-\psi(x)e\|\le 6\min(\tfrac1k,\phi(x))\le\sigma\1_{\{\phi>0\}}(x)$ and $f'=g'=0$ and $\psi=0$ outside $G_0=\{\omega>0\}$.
\end{proof}

In a rather straightforward way, we will use
Lemma~\ref{2} recursively to obtain the main tool for
our construction of a function non-differentiable at points of a given set $E$.

\begin{lem}\label{2x}
Suppose $E\subset H_0\subset \R^n$, $H_0$~is open,
$f_0\in\Lip(\R^n)\cap C^1(H_0)$ and
$\omega_0\in C(\R^n,[0,\infty))\cap C(H_0,(0,\infty))$.
Suppose further that for $k\ge1$ we are given
vectors $e_k\in\B$,
functions $\phi_k\in C(\R^n,[0,1])$
and $\sigma_k>0$ such that
$w_{e_k,\taux(\sigma_k)}(E\cap \{\phi_k>0\})=0$.
Then for each $j\ge1$ there are sets $H_j\subset \R^n$ and functions
$f_j,\omega_j, \psi_j:\R^n\to\R$ such that
\begin{enumerate}
\item\label{2x.F}
$H_j$ is open, $E\subset H_j\subset H_{j-1}$ and
$f_j\in\Lip(\R^n)\cap C^1(H_j)$;
\item\label{2x.1}
$\omega_j\in C(\R^n,[0,\infty))\cap C(H_j,(0,\infty))$
and $\omega_j\le\tfrac12\min(1,\omega_{j-1},\rho_{H_{j}}^2)$;
\item\label{2x.0}
$|f_j-f_{j-1}|\le \omega_{j-1}$ and $f_j(x)=f_{j-1}(x)$ when $\phi_j(x)=0$;
\item\label{2x.5}
if $h:\R^n\to\R$ and $|h-f_j|\le 2\omega_j$ then for every
$x\in H_j$ one may find $0<r<\omega_{j-1}(x)$
such that $\sup_{\|y\|\le r}|h(x+y)-h(x)-\spr{f_j'(x)}{y}|\le\sigma_j r$;
\item\label{2x.7}
$\psi_j:\R^n\to[0,1]$,
$0\le\psi_j\le\phi_j\1_{H_{j-1}}$ and $\psi_j=\phi_j$ on $H_j$;
\item\label{2x.8}
$\|f_j'(x)-f_{j-1}'(x)-\psi_j(x)e_j\|\le \sigma_j\1_{\{\phi_j>0\}}(x)$
for every $x\in E$;
\item\label{2x.9}
$\|f_j'(x)-\vv\|\le \|f_0'(x)+\sum_{i=1}^j\psi_i(x)e_i-\vv\|
+\sum_{i=1}^j \sigma_i\1_{\{\phi_i>0\}}(x)$ for any $\vv\in\R^n$ and a.e.~$x\in\R^n$.
\end{enumerate}
\end{lem}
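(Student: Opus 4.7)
The plan is to prove the lemma by induction on $j\ge 1$, where at each stage I apply Lemma~\ref{2} to produce a small ``building block'' that I add to $f_{j-1}$, and then use Lemma~\ref{3x} to choose the error tolerance $\omega_j$ governing all later approximations.

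At stage $j$, set $\hat\omega_{j-1}(x):=\min\bigl(\omega_{j-1}(x),\tfrac12\rho_{H_{j-1}}^2(x)\bigr)$, which is continuous on $\R^n$ and positive exactly on $H_{j-1}$ (for $j\ge 2$ this equals $\omega_{j-1}$ by the inductive \ref{2x.1}; for $j=1$ the truncation by $\rho_{H_0}^2$ is what forces the support to lie in $H_0$). I would then apply Lemma~\ref{2} with $\omega=\hat\omega_{j-1}$, $\phi=\phi_j$, $e=e_j$ and $\sigma=\sigma_j$; the width hypothesis $w_{e_j,\taux(\sigma_j)}(E\cap\{\phi_j>0\})=0$ is precisely what the lemma requires. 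This produces $\tilde f\in\Lips(\R^n)\cap C^1(\tilde H)$, $\tilde\psi:\R^n\to[0,1]$ and an open set $\tilde H\supset E$ with $\tilde H\subset\{\hat\omega_{j-1}>0\}=H_{j-1}$. Setting
\[H_j:=\tilde H,\quad f_j:=f_{j-1}+\tilde f,\quad \psi_j:=\tilde\psi\]
immediately gives \ref{2x.F} (using $f_{j-1}\in C^1(H_{j-1})\supset C^1(H_j)$), gives \ref{2x.0} from \ref{2.2} via $|\tilde f|\le\hat\omega_{j-1}\|e_j\|\le\omega_{j-1}$, and gives \ref{2x.7} from \ref{2.4}.

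For \ref{2x.1} and \ref{2x.5} I would invoke Lemma~\ref{3x} with $H=H_j$, $g=f_j$, $\omega=\omega_{j-1}$ and $\eta=\min(\sigma_j,1)$, obtaining a continuous nonnegative $\xi_j$, strictly positive on $H_j$, with $\xi_j\le\tfrac12\omega_{j-1}$ and the approximation property of \ref{3x.2}. I would then define
\[\omega_j(x):=\min\bigl(\xi_j(x),\tfrac12,\tfrac12\rho_{H_j}^2(x)\bigr),\]
which is continuous on $\R^n$, strictly positive on $H_j$, and satisfies $\omega_j\le\tfrac12\min(1,\omega_{j-1},\rho_{H_j}^2)$, yielding \ref{2x.1}; since $\omega_j\le\xi_j$, property \ref{2x.5} follows from \ref{3x.2}. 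Property \ref{2x.9} comes from telescoping: by Rademacher each $f_i$ is differentiable at a.e.\ point, and for a.e.\ $x\in\R^n$,
\[f_j'(x)-f_0'(x)-\sum_{i=1}^j\psi_i(x)e_i=\sum_{i=1}^j\bigl(f_i'(x)-f_{i-1}'(x)-\psi_i(x)e_i\bigr),\]
so the triangle inequality combined with the a.e.\ estimate from \ref{2.3} at each stage gives the claim.

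The hard part will be property \ref{2x.8}, which demands the estimate at every $x\in E$, whereas \ref{2.3} is stated only for almost every $x\in\R^n$. I plan to handle this by noting that the pointwise bound actually holds throughout the open set $H_j=\tilde H$: the proof of Lemma~\ref{2} applies Lemma~\ref{AL} and reads off from \ref{2z.2} the pointwise estimate $\|\tilde f'(x)-\tilde\psi(x)e_j\|\le 6\min(1/k,\phi_j(x))$ for every $x\in H_j$, which is dominated by $\sigma_j\1_{\{\phi_j>0\}}(x)$ through the choice $k\ge 6/\sigma_j$ made there. Since $E\subset H_j$ and $f_j'-f_{j-1}'=\tilde f'$ on $H_j$, this yields \ref{2x.8} pointwise on $E$, completing the inductive step.
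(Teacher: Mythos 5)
Your proof is correct and follows the same route as the paper: at stage $j$ apply Lemma~\ref{2} to produce the increment, set $f_j=f_{j-1}+\tilde f$, and then run Lemma~\ref{3x} to obtain the tolerance function that becomes $\omega_j$. The $\rho^2$ truncations are distributed slightly differently (you renormalise at every stage, whereas the paper normalises $\omega_0$ once at the outset and then lets the inductive form of~\ref{2x.1} do the work), but these are cosmetic; your $\omega_j:=\min(\xi_j,\tfrac12,\tfrac12\rho_{H_j}^2)$ does satisfy~\ref{2x.1} via $\xi_j\le\tfrac12\omega_{j-1}$ from Lemma~\ref{3x}, and~\ref{2x.5} follows since $\omega_j\le\xi_j$.

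One point is worth singling out. For~\ref{2x.8} the paper invokes only the a.e.\ inequality \eqref{2x.17} and the continuity of $f_j'$, $f_{j-1}'$ and $\psi_j=\phi_j$ on $H_j$. As written this is too terse: at a point $x\in E$ with $\phi_j(x)=0$ lying on the boundary of $\{\phi_j>0\}$, continuity of the left side together with the a.e.\ bound only yields $\|f_j'(x)-f_{j-1}'(x)\|\le\sigma_j$, whereas~\ref{2x.8} demands the exact vanishing $f_j'(x)=f_{j-1}'(x)$. You close this gap correctly by reading off the genuinely pointwise conclusion~\ref{2z.2} of Lemma~\ref{AL} as it is used inside the proof of Lemma~\ref{2}: with $\xi=5\min(1/k,\phi_j)$ and $\hat\omega\le\tfrac15$ one gets $\|\tilde f'(x)-\tilde\psi(x)e_j\|\le 6\min(1/k,\phi_j(x))$ at \emph{every} $x\in\tilde H=H_j$, and this continuous bound vanishes where $\phi_j$ does and is dominated by $\sigma_j\1_{\{\phi_j>0\}}$ since $k\ge 6/\sigma_j$. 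That is the honest way to obtain~\ref{2x.8}, and your write-up is the cleaner of the two on this point; an alternative that preserves modularity would be to record the continuous bound as an extra conclusion of Lemma~\ref{2} itself.
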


\begin{proof}
Replacing $\omega_0$ by
$\frac12\min(1,\omega_0,\rho_{H_0}^2)$ if necessary,
we may and will assume that $\omega_0\le\tfrac12 \min(1,\rho_{H_0}^2)$ and observe that then $H_0=\{\omega_0>0\}$.
Assume $j\ge 1$ and an open set $H_{j-1}\supset E$, a function $f_{j-1}\in \Lip(\R^n)\cap C^1(H_{j-1})$, and a function $\omega_{j-1}\in C(\R^n,[0,\infty))\cap C(E,(0,\infty))$ such that
$H_{j-1}=\{\omega_{j-1}>0\}$,
have been already defined;
this is certainly the case for $j=1$.
We will now explain how to construct functions $f_j,\omega_j,\psi_j$ and sets $H_j$ such that conditions \ref{2x.F}--\ref{2x.8} of the present lemma are satisfied. Notice that once we construct these objects, we have an open set $H_{j}\supset E$ and a function $f_{j}\in \Lip(\R^n)\cap C^1(H_{j})$ from \ref{2x.F}, and a function $\omega_{j}\in C(\R^n,[0,\infty))\cap C(E,(0,\infty))$ satisfying $\omega_{j}\le\min(1,\rho_{H_{j}}^2)$ for all $x\in\R^n$ from \ref{2x.1}. This will allow us recursively to construct all required objects so that \ref{2x.F}--\ref{2x.8} hold, and then we will finish the proof by showing that \ref{2x.9} holds as well.

By Lemma~\ref{2} find $g_j,\psi_j$ and
$H_j\subset \R^n$
such that
\begin{enumerate}[label=(\alph*)]
\item\label{2xp.1}
$H_j$ is open,
$E\subset H_j\subset H_{j-1}$ and
$g_j\in \Lips(\R^n)\cap C^1(H_j)$;
\item\label{2xp.2}
$|g_j(x)|\le\omega_{j-1}(x)\|e_j\|$ for all $x\in\R^n$ and $g_j(x)=0$
when $\phi_j(x)=0$;
\item\label{2xp.3}
$\|g_j'(x)-\psi_j(x)e_j\|\le\sigma_j\1_{H_{j-1}}(x)\1_{\{\phi_j>0\}}(x)\|e_j\|$
for almost all~$x\in\R^n$;
\item\label{2xp.4}
$0\le\psi_j(x)\le\phi_j(x)\1_{H_{j-1}}(x)$ for $x\in\R^n$ and
$\psi_j(x)=\phi_j(x)$ for $x\in H_j$.
\end{enumerate}
Here we used that
$H_{j-1}=\{\omega_{j-1}>0\}$ to obtain
conditions~\ref{2xp.1}--\ref{2xp.4} directly from
conditions~\ref{2.1}--\ref{2.4} of Lemma~\ref{2}.

Let $f_j:=f_{j-1}+g_j$, then \ref{2xp.1} and \ref{2xp.2} imply \ref{2x.F} and
\ref{2x.0}, respectively.
By Lemma~\ref{3x} we may find
$\xi_j\in C(\R^n,[0,\infty))\cap C(H_j,(0,\infty))$
having the property that
whenever $x\in H_j$ and $h:\R^n\to\R$ satisfies
$|h-f_j|\le\xi_j$, there is $0<r<\omega_{j-1}(x)$
such that $|h(x+y)-h(x)-\spr{f_j'(x)}{y}|\le\eta_j r$
whenever $\|y\|\le r$.
Letting $\omega_j:=\frac12\min(\omega_j,\xi_j,\rho_{H_j}^2)$,
we have \ref{2x.1} and \ref{2x.5}.
Clearly, \ref{2x.7} is the same as \ref{2xp.4}, and \ref{2xp.3}
implies that
\begin{equation}\label{2x.17}
\|f_j'(x)-f_{j-1}'(x)-\psi_j(x)e_j\|\le \sigma_j\1_{\{\phi_j>0\}}(x)
\end{equation}
for almost every $x\in\R^n$. From this,
since $f_j',f_{j-1}'$ and $\psi_j=\phi_j$ are continuous
on the open set $H_j\supset E$, we have \ref{2x.8}.

By the recursive use of the above construction we have defined
$H_j,f_j,\omega_j$ and $\psi_j$ such that \ref{2x.F}--\ref{2x.8} hold.
The last required statement \ref{2x.9}
follows by using \eqref{2x.17} to estimate,
for almost every $x\in\R^n$,
\begin{align*}
\|f_j'(x)-\vv\|&\le \|f_0'(x)+\sum_{i=1}^j\psi_i(x)e_i-\vv\|
+ \sum_{i=1}^j\|f_i'(x)-f_{i-1}'(x)-\psi_i(x)e_i\|\\
&\le
\|f_0'(x)+\sum_{i=1}^j\psi_i(x)e_i-\vv\| +\sum_{i=1}^j \sigma_i\1_{\{\phi_i>0\}}(x).
\qedhere
\end{align*}
\end{proof}

We will use Lemma~\ref{2x} to prove the
two key results, Theorem~\ref{4} and Theorem~\ref{9}.
To prove the former, we will
choose the objects required in Lemma~\ref{2x}
using the following combination
of suitable partitions of unity.

\begin{lem}\label{3}
Suppose $E\subset\R^n$ is cone unrectifiable and $\eps>0$.
Then there exist sequences of positive numbers $\sigma_l > 0$,
vectors $e_l\in\B$ and continuous functions $\phi_l:\R^n\to[0,1]$, such that
\begin{enumerate}
\item\label{3.1}
$\sum_{l\ge1}\sigma_l\1_{\spt(\phi_l)}\le\eps$;
\item\label{3.3}
$w_{e_l,\taux(\sigma_l)}(E\cap\{\phi_l>0\})=0$ for each $l\ge1$;
\item\label{3.4}
if $x\in E$, $e\in\NN(E,x)$ and $\|e\|\le 1$,
then for every $\eta>0$ there are arbitrarily large $l$
such that $\sigma_l<\eta$, $\|e-e_l\|<\eta$ and $\phi_l(x)=1$.
\end{enumerate}
\end{lem}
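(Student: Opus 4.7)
The plan is to enumerate countably many ``test configurations'' consisting of a rational direction $v_k$, a geometric weight $\sigma_{k,m}$, and a Whitney cube inside the open set on which the $(v_k,\taux(\sigma_{k,m}))$-width already vanishes, and to spread a total budget $\eps$ geometrically across the enumeration. Fix a countable dense subset $\{v_k\}_{k\ge1}$ of $\B$ containing $0$, let $N_0$ denote a dimensional constant bounding the overlap multiplicity of a standard Whitney decomposition of open sets in $\R^n$, and for each $(k,m)\in\N^2$ put
\[\sigma_{k,m}:=\eps\,2^{-k-m}/N_0,\qquad V_{k,m}:=\bigcup\bigl\{B(q,r):q\in\mathbb{Q}^n,\ r\in\mathbb{Q}_{>0},\ w_{v_k,\taux(\sigma_{k,m})}(E\cap B(q,r))=0\bigr\}.\]
Each $V_{k,m}$ is open, and countable subadditivity of width over unions of open sets yields $w_{v_k,\taux(\sigma_{k,m})}(E\cap V_{k,m})=0$.

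Apply the Whitney decomposition to each $V_{k,m}$, obtaining closed dyadic cubes $\{Q_{k,m,i}\}_i$ with disjoint interiors whose uniform dilations $(1+\delta)Q_{k,m,i}$ still lie in $V_{k,m}$ and form a family of multiplicity at most $N_0$ (using any fixed dyadic decomposition when $V_{k,m}=\R^n$). Let $\phi_{k,m,i}\colon\R^n\to[0,1]$ be continuous with $\phi_{k,m,i}=1$ on $Q_{k,m,i}$ and vanishing outside $(1+\delta)Q_{k,m,i}$. Reindex the triples $(k,m,i)$ as $l\in\N$ and set $e_l:=v_k$, $\sigma_l:=\sigma_{k,m}$. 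Condition \ref{3.1} then holds because
\[\sum_l\sigma_l\1_{\spt\phi_l}(x)\le\sum_{k,m}\sigma_{k,m}N_0=\eps\sum_{k,m}2^{-k-m}=\eps,\]
while \ref{3.3} holds because $\spt\phi_{k,m,i}\subset V_{k,m}$.

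The substantive part is \ref{3.4}. The case $e=0$ is handled by choosing $v_k=0$: then $V_{k,m}=\R^n$, every $x$ lies in some Whitney cube, and $\sigma_{k,m}\to 0$ supplies arbitrarily large $l$. Assume $e\ne0$. A short direct computation yields
\[\|v-e\|\le\frac{\alpha\|e\|}{2(1+\alpha)}\ \Longrightarrow\ C_{v,\alpha}\subset C_{e,\alpha/2},\quad\text{hence}\quad w_{v,\alpha}(\cdot)\le w_{e,\alpha/2}(\cdot).\]
Given $\eta>0$, choose $m$ large enough that $\sigma_{k,m}<\eta$ for every $k\ge 1$. Density of $\{v_k\}$ then supplies infinitely many $k$ for which $\|v_k-e\|<\min\bigl(\eta,\taux(\sigma_{k,m})\|e\|/(2(1+\taux(\sigma_{k,m})))\bigr)$. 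For each such $k$, the definition of $\NN(E,x)$ applied with $\eps'=\taux(\sigma_{k,m})/2$ produces $r>0$ with $w_{e,\eps'}(B(x,r)\cap E)=0$, and the cone inclusion transfers this to $w_{v_k,\taux(\sigma_{k,m})}(B(x,r)\cap E)=0$. Picking any rational ball $B(q,r')$ with $x\in B(q,r')\subset B(x,r)$ shows $x\in V_{k,m}$, hence $x\in Q_{k,m,i}$ for some $i$ and $\phi_{k,m,i}(x)=1$, giving arbitrarily large $l$ with the required properties.

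The main technical obstacle is the coordination between direction and scale: the closeness of $v_k$ to $e$ required to transfer the width inequality shrinks with $\taux(\sigma_{k,m})$, so $v_k$ cannot be fixed in advance of $m$. The resolution is that $v_k$ may be chosen as a function of $m$ using density of rational directions, which is all that \ref{3.4} needs since it demands only the existence of infinitely many suitable indices.
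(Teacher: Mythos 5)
The overall structure of your construction is different from the paper's: the paper lets the vectors $e_l$ be \emph{actual elements of} $\NN(E,y_l)$ for well-chosen points $y_l\in E$ (so that the width hypothesis in \ref{3.3} holds directly from Definition~\ref{NV}, with no need for cone comparison), builds $\eps_i$-nets of $\B$ at each scale $i$, and subordinates a partition of unity of order $n$ to the balls on which the width vanishes. You instead fix a single dense sequence of rational directions $v_k$, form the open sets $V_{k,m}$ where the relevant width vanishes, Whitney-decompose them, and try to transfer the width estimate from the true normal $e$ to the rational $v_k$ by a cone inclusion. Both strategies are reasonable on their face, but yours runs into a genuine gap in the proof of \ref{3.4}.

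The crux is the sentence ``Density of $\{v_k\}$ then supplies infinitely many $k$ for which $\|v_k-e\|<\min\bigl(\eta,\taux(\sigma_{k,m})\|e\|/(2(1+\taux(\sigma_{k,m})))\bigr)$.'' Density of a sequence in $\B$ gives, for each $\delta>0$, \emph{some} $k$ with $\|v_k-e\|<\delta$; it does not give infinitely many $k$ for which $\|v_k-e\|$ is below a threshold that itself depends on $k$ and tends to $0$. Here the threshold does decay in $k$: with $m$ fixed, $\sigma_{k,m}=\eps\,2^{-k-m}/N_0\to 0$ as $k\to\infty$, and, since the proof of Lemma~\ref{1} forces $\taux(\sigma)\lesssim\sigma$ (one takes $\taux=\sin\beta$ with $\tan\beta<\eps/2$ for $\eps=\sigma/7$), the allowed distance $\taux(\sigma_{k,m})\|e\|/(2(1+\taux(\sigma_{k,m})))$ is $O(2^{-k-m})$. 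Nothing prevents the index $k$ that density produces from being so large that this bound is already violated, and there may be no valid $k$ at all for a given $m$. The alternative order you suggest — choose $k$ first by density, then take $m$ large — fails for the same reason: once $k$ is fixed, $\|v_k-e\|=\delta_k>0$ is fixed, and the admissible bound $\approx\eps 2^{-k-m}\|e\|$ drops below $\delta_k$ as soon as $m$ is large, so only finitely many $m$ work for any given $k$. Replacing the dense sequence by $\eps_i$-nets at each level $i$, with $N_i\approx\eps_i^{-n}$ directions at level $i$, does not rescue the argument either: the budget constraint \ref{3.1} forces $\sigma_i\lesssim\eps/(N_0 N_i 2^i)$, while the cone-transfer requires $\eps_i\lesssim\taux(\sigma_i)\|e\|\lesssim\sigma_i\|e\|$, and substituting $N_i\approx\eps_i^{-n}$ leads (for $n\ge2$) to $\eps_i^{n-1}\gtrsim 2^i/(\eps\|e\|)$, i.e.\ $\eps_i\to\infty$, a contradiction (and there is the separate problem that the bound degenerates as $\|e\|\to 0$, which your argument does not address). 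So the obstacle you identify in your final paragraph is not a technicality you have sidestepped; it is a structural obstruction to coupling fixed directions with a summable weight budget and a $\sigma$-dependent cone-transfer.

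The way the paper avoids this entirely is worth internalizing: because $e_l$ is chosen inside $\NN(E,y_l)$, the width estimate $w_{e_l,\taux(\sigma_l)}(E\cap\spt\phi_l)=0$ holds for \emph{every} choice of $\sigma_l$ (Definition~\ref{NV} provides a ball for any tolerance), so $\sigma_l$ and $e_l$ can be prescribed independently. The only closeness used in \ref{3.4} is $\|e-e_l\|\le\|e-e_{i,j}\|+\|e_{i,j}-e_l\|<2\eps_i$, which requires only that the $\eps_i$-net is fine enough — no coupling with $\taux(\sigma_l)$. That decoupling is exactly what your rational-directions approach loses. (Your treatment of the budget in \ref{3.1}, the subadditivity of width, and the $e=0$ case are all fine.)
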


\begin{proof}
For $x\in E$, $e\in\NN(E,x)$ and any $\sigma>0$ there exists, by definition of the cone unrectifiable set, a radius
$\delta(x,e,\sigma)>0$ such that
$w_{e,\taux(\sigma)}(E\cap B_{x,e,\sigma})=0$, where $B_{x,e,\sigma}=B(x,\delta(x,e,\sigma))$.

We may suppose $\eps=1/p$ for some $p\in\N$ (so that $1/\eps$ is a positive integer).
For each $i\ge1$ we let $\eps_i:=2^{-i}\eps$ and
$\tau_i:=3^{-n}\eps_{i}^{n+1}(n+1)^{-1}$.
For each pair of $i\ge1$ and $j=1,\dots, 3^n\eps_i^{-n}$ choose $e_{i,j}\in\B$
such that $\B\subset\bigcup_j B(e_{i,j},\eps_i)$ for every fixed $i\ge1$.
Let
\[E_{i,j}:=\{x\in E:(\exists e\in\NN(E,x))\|e-e_{i,j}\|<\eps_i\},\]
so that of course $\bigcup_j E_{i_0,j}=E$ for each fixed $i_0\ge1$.
For each pair $(i_0,j_0)$ find a partition of unity $\{\phi_{i_0,j_0,k}:k\ge1\}$
of order~$n$ subordinated to
\[\{B_{y,u,\sigma}:
y\in E_{i_0,j_0},\,u\in \NN(E,y),\,\|u-e_{i_0,j_0}\|<\eps_{i_0}, \sigma=\tau_{i_0}\}.\]

Order the triples $(i,j,k)$
into a single sequence
$(i(l),j(l),k(l))$, and let
$\phi_l:=\min\bigl(1,(n+1)\phi_{i(l),j(l),k(l)}\bigr)$
and $\sigma_l:=\tau_{i(l)}$. Also, observing that
$\spt(\phi_l)=\spt(\phi_{i(l),j(l),k(l)})$, find
$y_l\in E_{i(l),j(l)}$ and $e_l\in\NN(E,y_l)$
such that $\spt(\phi_l)\subset B_{y_l,e_l,\sigma_l}$.
Notice for future reference that $\|e_l-e_{i(l),j(l)}\|<\eps_{i(l)}$.

We show that the Lemma holds with
the $\sigma_l$, $e_l$ and $\phi_l$ defined above.

To prove \ref{3.1}, observe that
for each fixed $i_0\ge1$ and $x_0\in\R^n$ there are
at most
$3^n\eps_{i_0}^{-n}(n+1)$ pairs $(j,k)$ for which $x_0\in\spt(\phi_{i_0,j,k})$.
Notice also that $\sigma_l$ is constant and
equal $\tau_{i_0}$ over all $l$ with the same value of $i(l)=i_0$.
Hence
\[\sum_l\sigma_l\1_{\spt(\phi_l)}(x_0)
\le\sum_i 3^n\eps_i^{-n}(n+1)\tau_{i} \le
\sum_i \eps_i\le\eps.\]

The statement \ref{3.3} is immediate from
$w_{e_l,\taux(\sigma_l)}(E\cap B_{y_l,e_l,\sigma_l})=0$ and the inclusion
$\spt(\phi_l)\subset B_{y_l,e_l,\sigma_l}$.

Finally, suppose $x\in E$, $e\in\NN(E,x)$, $\|e\|\le 1$, $\eta>0$ and $l_0\in\N$.
Let $i_0>\max\{i(l); l\le l_0\}$ be such that $\eps_{i_0}<\eta/2$.
For any $i>i_0$ there is $j$ such that $\|e-e_{i,j}\|<\eps_i<\eps_{i_0}<\eta/2$.
Then $x\in E_{i,j}$ and since the partition of unity $\{\phi_{i,j,k}:k\ge1\}$
is of order~$n$, there is $k$ such that
$\phi_{i,j,k}(x)\ge 1/(n+1)$. This implies $\phi_l(x)=1$ for $l$ satisfying $(i,j,k)=(i(l),j(l),k(l))$.
Then $l>l_0$ and $\sigma_l=\tau_i<\eps_i$, so
$\|e-e_l\|\le\|e-e_{i,j}\|+\|e_{i,j}-e_l\|<2\eps_i<\eta$, so \ref{3.4} holds as well.
\end{proof}

Our second use of Lemma~\ref{2x}, to prove Theorem~\ref{9},
will be more straightforward: we use it
to construct functions that will approximate the
required function.

\begin{lem}\label{U}
Suppose $E\subset H\subset \R^n$,
$E$ is uniformly purely unrectifiable, $H$~is open,
$\omega\in C(\R^n,[0,\infty))\cap C(H,(0,\infty))$
and $f\in\Lip(\R^n)\cap C^1(H)$.
Then for every
$e\in\R^n$ and
$\eta>0$
there are $g,\xi:\R^n\to\R$ and an open set $U\subset\R^n$ such that
\begin{enumerate}
\item\label{pU.1}
$E\subset U\subset H$,
$\xi\in C(\R^n,[0,\infty))\cap C(U,(0,\infty))$
and $\xi\le\tfrac12\omega$;
\item\label{pU.2}
$|g-f|\le \omega$, $\Lip(g)\le\max(\Lip(f),\|e\|)+\eta$
and $g \in C^1(U)$;
\item\label{pU.4}
if $x\in E$ and a function $h:\R^n\to\R$ satisfies
$|h-g|\le 2\xi$, there is $0<r<\omega(x)$
such that $\sup_{\|y\|\le r}|h(x+y)-h(x)-\spr{e}{y}|\le\eta r$.
\end{enumerate}
\end{lem}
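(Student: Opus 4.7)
The plan is to apply Lemma~\ref{2x} with a locally finite partition of unity on a neighborhood of $E$ in $H$ to build a function $g$ whose derivative at each $x\in E$ is close to $e$, and then use Lemma~\ref{3x} to produce the tolerance $\xi$. The case $e=0$ is trivial ($g=f$, $U=H$, $\xi=\omega/2$), so assume $e\ne 0$.

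Fix small $\delta,\sigma>0$ and an integer $M\ge\|e\|+\Lip(f)$, so that $u_y:=(e-f'(y))/M\in\B$ for every $y\in E$. Because $E$ is uniformly purely unrectifiable, $\NN(E,y)=\R^n$, and combining this with continuity of $f'$ on $H$ we choose $r_y>0$ with $\Bc(y,r_y)\subset H$, $\|f'(z)-f'(y)\|<\delta$ for $z\in B(y,r_y)$, and $w_{u_y,\taux(\sigma)}(E\cap B(y,r_y))=0$. Extract a locally finite refinement $\{B_k=B(y_k,r_{y_k})\}$ of $\{B(y,r_y):y\in E\}$ still covering $E$ and fix a $C^\infty$ partition of unity $\{\phi_k\}$ of order $n$ with $\spt(\phi_k)\subset B_k$ and $\sum_k\phi_k=1$ on $W:=\bigcup_kB_k\supset E$. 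Enumerate the pairs $(k,m)$, $1\le m\le M$, in a single sequence $j=1,2,\ldots$; set $e_j:=u_{y_{k(j)}}$, $\phi_j:=\phi_{k(j)}$, $\sigma_j:=\sigma$; and feed these data into Lemma~\ref{2x} with $f_0:=f$, $H_0:=H$, $\omega_0:=\tfrac14\omega$. The required $w_{e_j,\taux(\sigma_j)}(E\cap\{\phi_j>0\})=0$ holds because $\spt(\phi_j)\subset B_{k(j)}$.

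The uniform limit $g:=\lim_j f_j$ exists since $|f_j-f_{j-1}|\le\omega_{j-1}\le 2^{-(j-1)}\omega_0$, and $|g-f|\le 2\omega_0\le\omega/2$. On $E$, $\psi_j=\phi_j$ by \ref{2x.7}; the $M$ copies of $u_{y_k}$ sum to $e-f'(y_k)$; and $\sum_k\phi_k\equiv 1$ on $W$. Lemma~\ref{2x}\ref{2x.9} with $\vv=e$ therefore gives, for $x\in E$,
\[ \|g'(x)-e\|\le\Bigl\|\sum_k\phi_k(x)\bigl(f'(x)-f'(y_k)\bigr)\Bigr\|+\sum_j\sigma\1_{\{\phi_j>0\}}(x)\le\delta+M(n+1)\sigma, \]
the second sum being bounded because the partition has order $n$ and each $k$ contributes $M$ indices $j$. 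The parallel convex-combination argument with coefficients $\beta_k:=\sum_m\psi_{(k,m)}/M\in[0,\phi_k]$ in place of $\phi_k$ bounds $\|f_j'\|$ a.e.\ by $\max(\Lip(f),\|e\|)+\delta+M(n+1)\sigma$, so $\Lip(g)\le\max(\Lip(f),\|e\|)+\delta+M(n+1)\sigma$. Choosing $\delta,\sigma$ so that $\delta+M(n+1)\sigma\le\eta/2$ yields both $\Lip(g)\le\max(\Lip(f),\|e\|)+\eta$ and $\|g'(x)-e\|\le\eta/2$ on $E$.

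Local finiteness of $\{\phi_k\}$ combined with $f_j-f_{j-1}\equiv 0$ on $\{\phi_j=0\}$ (from the analogue of \ref{2.2} in Lemma~\ref{2}) shows that the sequence $f_j$ stabilizes in a neighborhood of each $x_0$ at some $f_{J_0(x_0)}\in C^1(H_{J_0(x_0)})$; the union of these neighborhoods intersected with the corresponding $H_{J_0(x_0)}$ is an open set $U$ with $E\subset U\subset H$ on which $g\in C^1$. Applying Lemma~\ref{3x} on $U$ to $g$ with $\omega/2$ and $\eta/2$ in place of $\omega$ and $\eta$ produces $\xi$ satisfying (i); then if $|h-g|\le 2\xi$ and $x\in E$, Lemma~\ref{3x} furnishes $0<r<\omega(x)/2$ with $\sup_{\|y\|\le r}|h(x+y)-h(x)-\spr{g'(x)}{y}|\le(\eta/2)r$, and the triangle inequality together with $\|g'(x)-e\|\le\eta/2$ yields (iii). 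The main obstacle we expect is engineering the cover to simultaneously deliver the local cone-null condition (which needs small balls anchored at points of $E$) and to be locally finite in $H$, so that the infinite construction stabilizes locally and $g$ inherits the $C^1$ smoothness of each $f_{J_0(x_0)}$ — without local finiteness the limit could fail to be $C^1$ anywhere.
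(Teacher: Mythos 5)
Your proof is correct and follows essentially the same strategy as the paper: a partition of unity of order $n$ subordinated to small balls around points of $E$, an application of Lemma~\ref{2x} with vectors chosen so that $\sum_i\psi_i e_i$ telescopes to $e-f'$ on $E$, local stabilization of the sequence $(f_j)$ to get $C^1$-smoothness on an open set $U\supset E$, and Lemma~\ref{3x} to manufacture $\xi$. The only genuine difference is the normalization: you feed $M$ copies of $(e-f'(y_k))/M$ per ball to keep every vector inside $\B$, whereas the paper interleaves $-f'(x_k)$ and $e$ as two vectors per ball — a choice that technically presumes $\Lip(f),\|e\|\le 1$, which holds in the only place Lemma~\ref{U} is invoked, so your variant is slightly more robust but not conceptually different.
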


\begin{proof}
Let $\sigma=\eta/8(n+1)$. Since $f\in C^1(H)$ and $E\subset H$,
for each $x\in E$ there is $\delta_x>0$ such that
$\|f'(y)-f'(z)\|<\tfrac14\eta$
for $y,z\in B_x:=B(x,\delta_x)$.
Find a partition of unity $\{\gamma_{k}:k\ge1\}$
of order~$n$ subordinated to $\{B_x: x\in E\}$
and choose
$x_k\in E$ such that $\spt(\gamma_k)\subset B_{x_k}$.

Set $H_0=H$, $\omega_0=\tfrac12\omega$, $f_0=f$, $\sigma_k=\sigma$,
$e_{2k-1}=-f'(x_k)\in\NN(E,x_k)$, $e_{2k}=e\in\NN(E,x_k)$, and $\phi_{2k-1}=\phi_{2k}=\gamma_k$. Since $E$ is uniformly purely unrectifiable, the hypothesis of Lemma~\ref{2x} is satisfied, and so
find $f_k$, $\omega_k$, $H_k$ and $\psi_k$, $k\ge 1$, such that the statements \ref{2x.F}--\ref{2x.9} of Lemma~\ref{2x} hold (we leave out \ref{2x.5} and \ref{2x.8} as we do not use them here):
\begin{enumerate}[label=(\alph*)]
\item\label{UP.0}
$H_k$ is open, $E\subset H_k\subset H_{k-1}$ and
$f_k\in\Lip(\R^n)\cap C^1(H_k)$;
\item\label{UP.2}
$\omega_k\in C(\R^n,[0,\infty))\cap C(H_k,(0,\infty))$
and $\omega_k\le\tfrac12\min(1,\omega_{k-1},\rho_{H_{k}}^2)$;
\item\label{UP.1}
$|f_k-f_{k-1}|\le \omega_{k-1}$ and $f_k(x)=f_{k-1}(x)$ when $\phi_k(x)=0$;
\item\label{UP.7}
$\psi_k:\R^n\to[0,1]$,
$0\le\psi_k\le\phi_k\1_{H_{k-1}}$ and $\psi_k=\phi_k$ on $H_k$;
\item\label{UP.9}
$\|f_k'(x)-\vv\|\le \|f'(x)+\sum_{i=1}^k\psi_i(x)e_i-\vv\|
+\sum_{i=1}^k \sigma\1_{\{\phi_i>0\}}(x)$ for all $\vv\in\R^n$ and a.e.~$x\in\R^n$.
\end{enumerate}

By \ref{UP.2} and \ref{UP.1},
the sequence of Lipschitz functions $(f_k)$ converges to a function $g:\R^n\to\R$
and $|g-f|\le\omega$. For every $x$ at which $f'(x)$ exists write
\begin{equation}\label{UPE.1}
f'(x)+\sum_{i=1}^{2k} \psi_i(x)e_i =a f'(x)+b e +v,
\end{equation}
where
$a=1-\sum_{i=1}^k \psi_{2i-1}(x)$, $b=\sum_{i=1}^k \psi_{2i}(x)$,
$v=\sum_{i=1}^k \psi_{2i-1}(x)(f'(x)-f'(x_i))$.
Using $\sum_i\gamma_i\le 1$ as it is a partition of unity, and \ref{UP.7} to get
\begin{equation}\label{UPE.4}
0\le\psi_{2i}\le\phi_{2i}\1_{H_{2i-1}}=\phi_{2i-1}\1_{H_{2i-1}}\le\psi_{2i-1}\le\phi_{2i-1}=\gamma_i,
\end{equation}
we see that $a,b\ge 0$, $a+b=1+\sum_{i=1}^k (\psi_{2i}(x)-\psi_{2i-1})(x)\le 1$,
and
$\|v\|\le\sum_{i: x\in\spt(\gamma_i)}\gamma_i(x)\|f'(x)-f'(x_i)\|$.
Recall that $\spt(\gamma_i)\subset B_{x_i}$,
and by the definition of the ball $B_{x_i}$
we have $\|f'(x)-f'(x_i)\|<\tfrac14\eta$ for $x\in B_{x_i}$,
hence $\|v\|<\tfrac14\eta$. Thus we conclude from \eqref{UPE.1} that for almost all $x\in\R^n$ and all $k\ge1$
\begin{equation}\label{UPE.2}
\Bigl\|f'(x)+\sum_{i=1}^{2k} \psi_i(x)e_i\Bigr\|
\le\max(\Lip(f),\|e\|)+\eta/4.
\end{equation}
Since for every $x$ there are at most $2(n+1)$ values of $i$
with $\phi_i(x)\ne 0$, we see that
$\sum_{i=1}^{2k} \sigma\1_{\{\phi_i>0\}}(x)\le 2(n+1)\sigma=\tfrac14\eta$
for any $k\ge1$, and infer from
\ref{UP.9} with $\vv=0$ and \eqref{UPE.2} that for a.e.~$x$,
\[\|f_{2k}'(x)\|\le \|f'(x)+\sum_{i=1}^{2k}\psi_i(x)e_i\|
+\sum_{i=1}^{2k} \sigma\1_{\{\phi_i>0\}}(x)\le \max(\Lip(f),\|e\|)+\tfrac12\eta.\]
Since, by \ref{UP.0}, $f_{2k}$ is Lipschitz, we conclude
$\Lip(f_{2k})< \max(\Lip(f),\|e\|)+\eta$ for each $k$,
and so \ref{pU.2} holds.

For each $x\in E$ there is a neighbourhood where all but a finite number
of the functions $\phi_k$'s are zero,
so we can find $r_x>0$ and $k_x\in\N$ such that
$B(x,r_x)\cap\spt{\phi_k}=\emptyset$ for $k\ge k_x$.
Let $U_x:=B(x,r_x)\cap H_{k_x}$, where $H_{k_x}\supset E\ni x$ is defined in \ref{UP.0},
and define an open set $U:=\bigcup_{x\in E} U_x$.
As $x\in U_x\subset H_{k_x}\subset H_0=H$ for any $x\in E$, we conclude that $E\subset U\subset H$, this verifies the first two
statements of \ref{pU.1}.
By \ref{UP.1}, $g=f_{k}$ on $B(x,r_x)\supset U_x$ for every $k\ge k_x$;
hence $g\in C^1(U_x)$ by \ref{UP.0} as $U_x\subset H_{k_x}$,
and so $g\in C^1(U)$.
Thus Lemma~\ref{3x} applied to $U,g,\omega$ and $\frac12\eta$ provides a continuous function $\xi:\R^n\to[0,\infty)$
such that \ref{pU.1} holds and for every
$x\in E\subset U$ and $h:\R^n\to\R$ satisfying
$|h-g|\le 2\xi$, there is $0<r<\omega(x)$
such that
\begin{equation}\label{UPE.3}
\sup_{\|y\|\le r}|h(x+y)-h(x)-\spr{g'(x)}{y}|\le\tfrac12\eta r.
\end{equation}
Observe now that for $x\in E$ we have $x\in H_i$ for any $i\ge1$, hence $\psi_i(x)=\phi_i(x)$ for any $i\ge1$ by \ref{UP.7}. Together with definition of $k_x$ this implies that $\sum_{i=1}^k\psi_{2i-1}(x)=\sum_{i=1}^k\phi_{2i-1}(x)=\sum_{i=1}^k\gamma_i(x)=\sum_{i\ge1}\gamma_i(x)=1$ for any $k\ge k_x$, hence for such $k$ the constants
$a,b$ from~\eqref{UPE.1} satisfy
$a=0$ and, similarly, $b=1$.
Using equation~\eqref{UPE.1} and
recalling that $\|v\|\le\tfrac14\eta$,
we get
$\|f'(x)+\sum_{i=1}^{2k} \psi_i(x)e_i -e\|=\|v\|\le\tfrac14\eta$ for any $k\ge k_x$.
With $k=k_x$ we have $g=f_{2k}$ on $U_x$, hence using \ref{UP.9} with $\vv=e$ it follows
\[\|g'(x)-e\|=\|f_{2k}'(x)-e\|
\le \|f'(x)+\sum_{i=1}^{2k} \psi_i(x)e_i -e\|+ \sigma\sum_{i=1}^{2k}\1_{\{\phi_i>0\}}(x)
\le\tfrac12\eta,\]
and by combining this with \eqref{UPE.3}, we obtain \ref{pU.4}.
\end{proof}

\section{Proofs of main results}\label{proofs}

\begin{proof}[Proof of Theorem \ref{4}]
Recall that we are given a cone unrectifiable set $E\subset \R^n$.
We are also given $\eps>0$ and a continuous function $\omega\ge 0$
such that $E\subset\{x:\omega(x)>0\}$; if $\omega$ is not given, we set
$\omega=1$ everywhere on $\R^n$.

We begin by finding numbers $\sigma_k >0$,
vectors $e_k\in\B$ and continuous functions $\phi_k:\R^n\to[0,1]$, $k=1,2,\dots$, such that
\begin{enumerate}[label=(\Alph*)]
\item\label{9p.1}
$\sum_k\sigma_k\1_{\spt(\phi_k)}\le\eps$;
\item\label{9p.3}
$w_{e_k,\taux(\sigma_k)}(E\cap\{\phi_k>0\})=0$;
\item\label{9p.4}
if $x\in E$, $e\in\NN(E,x)$ and $\|e\|\le 1$,
then for every $\eta>0$ there are arbitrarily large $k$
such that $\sigma_{2k-1}<\eta$, $\|e-e_{2k-1}\|<\eta$ and $\phi_{2k-1}(x)=1$;
\item\label{9p.5}
for every $k\ge 1$, $\phi_{2k}=\phi_{2k-1}$ and $e_{2k}=-e_{2k-1}$.
\end{enumerate}
For this, it suffices to take $\hat\sigma_l$,
$\hat e_l$ and $\hat\phi_l$ from Lemma~\ref{3}
with $\eps$ replaced by $\eps/2$ and let
$\sigma_{2l-1}=\sigma_{2l}:=\hat\sigma_l$,
$\phi_{2l-1}=\phi_{2l}:=\hat\phi_l$,
$e_{2l-1}:=\hat e_l$ and $e_{2l}:=-\hat e_l$.

We set $f_0:=0$, $H_0:=\{\omega>0\}$,
$\omega_0:=\tfrac12\min(1,\omega,\rho_{H_{0}}^2)$
and use Lemma~\ref{2x} to find
$f_j,\omega_j, H_j,\psi_j$, $j=1,2,\dots$ such that

\begin{enumerate}[resume*]
\item\label{9px.F}
$H_j$ is open, $E\subset H_j\subset H_{j-1}$ and
$f_j\in\Lip(\R^n)\cap C^1(H_j)$;
\item\label{9px.4}
$\omega_j\in C(\R^n,[0,\infty))\cap C(H_j,(0,\infty))$
and $\omega_j\le\tfrac12\min(1,\omega_{j-1},\rho_{H_{j}}^2)$;
\item\label{9px.0}
$|f_j-f_{j-1}|\le \omega_{j-1}$ and $f_j(x)=f_{j-1}(x)$ when $\phi_j(x)=0$;
\item\label{9px.5}
if $h:\R^n\to\R$ and $|h-f_j|\le 2\omega_j$ then for every
$x\in H_j$ one may find $0<r<\omega_{j-1}(x)$
such that $\sup_{\|y\|\le r}|h(x+y)-h(x)-\spr{f_j'(x)}{y}|\le\sigma_j r$;
\item\label{9px.7}
$\psi_j:\R^n\to[0,1]$,
$0\le\psi_j\le\phi_j\1_{H_{j-1}}$ and $\psi_j=\phi_j$ on $H_j$;
\item\label{9px.6}
$\|f_j'(x)-f_{j-1}'(x)-\psi_j(x)e_j\|\le\sigma_j\1_{\{\phi_j>0\}}(x)$
for every $x\in E$;
\item\label{2px.9}
$\|f_j'(x)-\vv\|\le \|f_0'(x)+\sum_{i=1}^j\psi_i(x)e_i-\vv\|
+\sum_{i=1}^j \sigma_i\1_{\{\phi_i>0\}}(x)$ for all $\vv\in\R^n$ and a.e.~$x\in\R^n$.
\end{enumerate}

Notice that \ref{9px.4} implies $\omega_j\le 2^{i-j}\omega_j$ for $j\ge i$, and so also $\omega_j\le 2^{-j}$.
Consequently, by \ref{9px.0}, $f_j$ converge uniformly to a function
$f:\R^n\to\R$ and
$|f-f_j|\le \sum_{i=j}^\infty \omega_i \le 2\omega_j$.
We show that $f$ has the required properties.

Notice that \ref{9px.7} and \ref{9p.5} imply
that
\[\psi_{2i-1}(x) e_{2i-1}+\psi_{2i}(x)e_{2i}=-(\psi_{2i-1}(x)-\psi_{2i}(x))\1_{H_{2i-2}\setminus H_{2i}}(x)e_{2i},\]
and this vector has norm at most $\1_{H_{2i-2}\setminus H_{2i}}(x)$, as condition~\ref{9px.7} implies $0\le\psi_{2i}\le\phi_{2i}\1_{H_{2i-1}}=\phi_{2i-1}\1_{H_{2i-1}}\le\psi_{2i-1}\le\phi_{2i-1}\le1$ (cf.~\eqref{UPE.4}). Hence
\ref{2px.9} with $\vv=0$ and \ref{9p.1} give
\begin{align*}
\|f_{2k}'(x)\|
&=\Bigl\|\sum_{i=1}^{k} (\psi_{2i}(x) e_{2i}+\psi_{2i-1}(x)e_{2i-1})\Bigr\|
+ \sum_{i=1}^{2k} \sigma_i\1_{\{\phi_i>0\}}(x)\\
&\le \sum_{i=1}^{k} \1_{H_{2i-2}\setminus H_{2i}}(x)
+\sum_{i=1}^{2k}\sigma_i\1_{\{\phi_i>0\}}(x)\le 1+\eps
\end{align*}
for almost every $x$.
Since \ref{9px.F} shows that $f_{2k}$ is Lipschitz, $\Lip(f_{2k})\le 1+\eps$,
and we conclude that $\Lip(f)\le 1+\eps$.

For every $i\ge1$ and $x\in E\subset H_{2i}\subset H_{2i-1}$, \ref{9px.7}, \ref{9p.5}
and \ref{9px.6} imply
\begin{align*}
\|&f_{2i}'(x)-f_{2i-2}'(x)\|\\
&\;\;=\|(f_{2i}'(x)-f_{2i-1}'(x)-\phi_{2i}(x)e_{2i})
+(f_{2i-1}'(x)-f_{2i-2}'(x)-\phi_{2i-1}(x)e_{2i-1})\|\\
&\;\;\le \sigma_{2i}\1_{\{\phi_{2i}>0\}}(x)+\sigma_{2i-1}\1_{\{\phi_{2i-1}>0\}}(x).
\end{align*}
Since $\sum_j\sigma_j\1_{\{\phi_j>0\}}(x)\le \eps$ by \ref{9p.1}, the restrictions of
$f_{2k}'$ to $E$ converge pointwise to a function
$u:E\to\R^n$ and $\|u(x)\|\le\eps$ for $x\in E$.

Suppose $x\in E$, $e\in\NN(E,x)$, $\|e\|\le 1$ and $\eta>0$.
By \ref{9p.4}
there is $k$ such that $2^{-2k}<\eta$,
$\|f_{2k}'(x)-u(x)\|<\tfrac14\eta$,
$\|e-e_{2k+1}\|<\tfrac14\eta$, $\sigma_{2k+1}<\tfrac14\eta$ and $\phi_{2k+1}(x)=1$. Since $x\in E\subset H_{2k+1}$, the latter immediately implies $\psi_{2k+1}(x)=1$ by \ref{9px.7}.
Since
$|f-f_{2k+1}|\le 2\omega_{2k+1}$
and \ref{9px.6} gives $\|f_{2k+1}'(x)-(f_{2k}'(x)+e_{2k+1})\|\le\sigma_{2k+1}$, we conclude that
\ref{9px.5} provides $0<r <\omega_{2k}(x)\le 2^{-2k}\omega_0<\eta$ such that
for every $\|y\|\le r$,
\begin{align*}
|f(x+&y)-f(x)-\spr{u(x)+e}{y}|\\
&\le
|f(x+y)-f(x)-\spr{f_{2k+1}'(x)}{y}| + \|f_{2k+1}'(x)-(f_{2k}'(x)+e_{2k+1})\| \|y\|\\
&\quad + \|f_{2k}'(x)-u(x)\| \|y\| +\|e_{2k+1}-e\|\|y\|\\
&<(\sigma_{2k+1} +\sigma_{2k+1}+\eta/4+\eta/4) r<
\eta r.
\end{align*}
Since $\eta>0$ may be arbitrarily small,
\begin{equation}\label{E1a}
\adjustlimits\liminf_{r\searrow 0}\sup_{\|y\|\le r}
\frac{|f(x+y)-f(x)-\spr{e+u(x)}y|}{r} =0,
\end{equation}
which is the main statement we wished to prove.
The estimate of the lower and upper derivatives is an immediate
consequence: if $e\in\NN(E,x)$ and $\|e\|\le 1$, we use
\eqref{E1a} for $e$ and $-e$ to infer
\[{D^+}f(x;y)-{D_+}f(x;y)
\ge
\spr{e+u(x)}y - \spr{-e+u(x)}y = 2\spr ey.\qedhere\]
\end{proof}

\begin{proof}[Proof of Corollary \ref{CAM}]
We are given $E=\bigcup_{k\ge1} E_k\subset\R^n$
where $E_k$ are disjoint cone unrectifiable $F_\sigma$ sets, and
$\NN_x=\NN(E_k,x)\cap\B$ for $x\in E_k$.

Write $E_k=\bigcup_{j\ge1} H_{k,j}$ where $H_{k,j}$ are closed cone unrectifiable sets, and let
$F_{k,j}:=\bigcup_{i<j} H_{k,i}$ and $E_{k,j}:=H_{k,j}\setminus F_{k,j}$,
so that $E_{k,j}$ are pairwise disjoint over all $(k,j)$.
Let $c_{k,j}:=2^{-k-j}$ and
$\omega_{k,j}(x):= c_{k,j}\min(1,\dist^2(x,F_{k,j}))$.
By Theorem \ref{4} there are Lipschitz functions $f_{k,j}:\R^n\to\R$ such that
$\Lip(f_{k,j}) < 2$, $|f_{k,j}|\le \omega_{k,j}$ and
\begin{align*}
{D}^+f_{k,j}(x;y)-{D}_+f_{k,j}(x;y)&\ge 2\sup\{\spr ey : e\in\NN(E_{k,j},x),\,\|e\|\le1\}\\
&\ge2\sup_{e\in\NN_x} \spr ey
\end{align*}
for $x\in H_{k,j}$ and $y\in\R^n$;
the last inequality follows from
$\NN_x\subset\NN(E_{k,j},x)$.

Apply Lemma~\ref{AL} to $\omega=\omega_{k,j+1}$, $H=\{\omega_{k,j+1}>0\}$, $g=f_{k,j}$, $\Phi=0$ and $\xi=2$ to find Lipschitz functions $g_{k,j}:\R^n\to\R$ such that
$g_{k,j}\in C^1\{\omega_{k,j+1}>0\}$,
$|g_{k,j}-f_{k,j}|\le \omega_{k,j+1}$ and $\Lip(g_{k,j})\le 3$.
We observe that $g_{k,j}$ is differentiable at every $x\notin H_{k,j}$.
Indeed, for such an $x$, if $\omega_{k,j}(x)=0$, i.e.\ $x\in F_{k,j}\subset F_{k,j+1}$, then
$g_{k,j}(x)=f_{k,j}(x)=0$ as $\omega_{k,j}(x)=\omega_{k,j+1}(x)=0$, and $|g_{k,j}(y)|\le2c_{k,j}\|y-x\|^2\le\|y-x\|^2$, using upper estimates for $|g_{k,j}-f_{k,j}|$ and $|f_{k,j}|$, and $x\in F_{k,j}\subset F_{k,j+1}$; hence $g_{k,j}'(x)=0$. If, however, $x\notin H_{k,j}$ and $\omega_{k,j}(x)>0$, then $x\notin E_{k,j}\cup F_{k,j}$, hence
$\omega_{k,j+1}(x)>0$ and so it follows that $g_{k,j}$ is $C^1$ on a neighbourhood of~$x$.
We also observe that for every $x\in H_{k,j}$ and $y\in\R^n$, we have $x\in F_{k,j+1}$, and therefore
$|g_{k,j}(y)-f_{k,j}(y)|\le c_{k,j+1}\|y-x\|^2$ and hence $g_{k,j}(x)=f_{k,j}(x)$ and
\begin{equation}\label{E1b}
{D}^+g_{k,j}(x;y)-{D}_+g_{k,j}(x;y)
= {D}^+f_{k,j}(x;y)-{D}_+f_{k,j}(x;y)\ge 2\sup_{e\in \NN_x}\spr ey.
\end{equation}
Summarising, $g_{k,j}$ is differentiable at every $x\not\in H_{k,j}$ and is not differentiable at any $x\in H_{k,j}$, moreover, it satisfies \eqref{E1b} at such points $x$.

We let
$\displaystyle f:=\sum_{(s,t)} c_{s,t}g_{s,t}$ and
$\displaystyle h_{k,j}:=\sum_{(s,t)\ne(k,j)} c_{s,t}g_{s,t}$. Since
for any $(s,t)$, if $x\notin H_{s,t}$, then the function $g_{s,t}$
is differentiable at $x$, and
since we have $\sum_{s,t} \Lip( c_{s,t}g_{s,t})<\infty$, we infer that
$f$ is differentiable at any $\displaystyle x\notin\bigcup_{(s,t)}H_{s,t}=E$ and
$h_{k,j}$ is differentiable at any $x\in H_{k,j}\cup(\R^n\setminus E)$.

Let $x\in E_k$ and find $j$ such that $x\in H_{k,j}$.
Then for every $y\in\R^n$,
${D}^+g_{k,j}(x;y)-{D}_+g_{k,j}(x;y)\ge 2\sup_{e\in \NN_x}\spr ey$ by \eqref{E1b},
and so, since $f=c_{k,j}g_{k,j} + h_{k,j}$ and $h_{k,j}$ is differentiable at $x$,
we conclude that
\[{D}^+f(x;y)-{D}_+f(x;y)\ge 2c_{k,j} \sup_{e\in \NN_x}\spr ey.\qedhere\]
\end{proof}

\begin{proof}[Proof of Corollary \ref{CAMx}]
We are given a set $E\subset\R^n$
that is a countable union of (not necessarily disjoint) cone unrectifiable $F_\sigma$ sets.
Since each of these $F_\sigma$ sets is a countable union of closed
cone unrectifiable sets, we can write $E=\bigcup_{k=1}^\infty F_k$ where
$F_k$ are closed and cone unrectifiable.
Hence $E=\bigcup_{k=1}^\infty E_k$ where $E_k:=F_k\setminus\bigcup_{j<k} F_j$
are disjoint cone unrectifiable $F_\sigma$ sets, and it suffices to
take the function $f$ obtained
from Corollary~\ref{CAM} used with these sets $E_k$.
\end{proof}

\begin{proof}[Proof of Corollary \ref{ACP-AM}]
We are given a Radon measure $\mu$ on $\R^n$ and
a $\mu$-measurable map $T:\R^n\to\bigcup_{m=0}^n G(n,m)$
such that for every unit vector $e$ and $\alpha\in(0,1)$, the set
$\{x: C_{e,\alpha}\cap T(x)=\nolinebreak\{0\}\}$,
where $C_{e,\alpha}:=\{u: |\spr ue|\ge\alpha\|u\|\}$,
is the union of a $\mu$-null set and a set $E$ with $w_{e,\alpha}(E)=0$.
We show that there are cone unrectifiable $F_\sigma$ sets $E_k$
such that $\mu(\R^n\setminus\bigcup_k E_k)=0$ and
$T(x)^\perp \subset\NN(x,E_k)$ for every $x\in E_k$.
Then the function $f$ from Corollary~\ref{CAM} will
have all the required properties.

By Lusin's Theorem, $\mu$-almost all of $\R^n$ is covered by
the union of disjoint closed sets $F_k$ such that
for each $k$, the restriction of $T$ to $F_k$ is continuous.
For every rational $\alpha\in(0,1)$ and $u$ from a countable dense subset $Q$ of the unit sphere in $\R^n$
write $\{x: C_{u,\alpha}\cap T(x)=\nolinebreak\{0\}\}=Z_{u,\alpha}\cup E_{u,\alpha}$,
where
$\mu(Z_{u,\alpha})=\nobreak0$ and $w_{u,\alpha}(E_{u,\alpha})=0$. Letting $E_k$ be
$F_\sigma$ subsets of $F_k\setminus\bigcup_{u,\alpha} Z_{u,\alpha}$ satisfying
$\mu(F_k\setminus E_k)=0$, we just need to show that $T(x)^\perp\subset\NN(x,E_k)$
for $x\in\nobreak E_k$. For this, assume $x\in E_k$, $e\in T(x)^\perp$ and $\eps\in(0,1)$,
and choose $u\in Q$ and rational $\alpha\in(0,1)$ so that
$C_{e,\eps} \subset C_{u,\alpha}$ and $C_{u,\alpha}\cap T(x)=\{0\}$.
By continuity of $T$ on $F_k$, there is $r>0$ such that
$C_{u,\alpha}\cap T(y)=\{0\}$ for every $y\in B(x,r)\cap F_k$.
Hence $B(x,r)\cap E_k\subset E_{u,\alpha}$ and
$w_{e,\eps}(B(x,r)\cap E_k)\le w_{u,\alpha}(E_{u,\alpha})=0$.
\end{proof}

\begin{proof}[Proof of Theorem~\ref{9}]
Let $E$ be the given uniformly purely unrectifiable set.
Pick a sequence $e_k$
dense in the unit ball of $\R^n$ such that $\|e_k\|\le 1-2^{-k}$.

Let $f_0=0$, $H_0=\R^n$, $\omega_0=1$ and $\eta_k=2^{-k-1}$.
When $f_{k-1}$, $H_{k-1}$ and $\omega_{k-1}$ have been defined,
we use Lemma~\ref{U}
to find $f_k$, $H_k$ and $\omega_k:=\xi$ such that
\begin{enumerate}[label=(\alph*)]
\item\label{p9.1}
$E\subset H_k\subset H_{k-1}$,
$\omega_k\in C(\R^n,[0,\infty))\cap C(U,(0,\infty))$
and $\omega_k\le\tfrac12\omega_{k-1}$;
\item\label{p9.2}
$|f_k-f_{k-1}|\le \omega_{k-1}$, $\Lip(f_k)\le \max(\Lip(f_{k-1}),\|e_k\|)+\eta_k$ and $f_k\in C^1(H_k)$;
\item\label{p9.4}
if $x\in E$ and $h:\R^n\to\R$ satisfies
$|h-f_k|\le 2\omega_k$, there is $0<r<\omega_{k-1}(x)$
such that $\sup_{\|y\|\le r}|h(x+y)-h(x)-\spr{e_k}{y}|\le\eta_k r$.
\end{enumerate}

Notice that $\omega_0=1$ and the last inequality in \ref{p9.1}
imply $\omega_j\le 2^{j-k}\omega_k$ and $\omega_k\le 2^{-k}$
for $j\ge k\ge 0$.
From \ref{p9.2} we see by induction that $\Lip(f_k)\le 1-2^{-k-1}$.
Hence the inequality $|f_k-f_{k-1}|\le\omega_{k-1}\le 2^{-k+1}$
implies that $f_k$ converge to some $f:\R^n\to\R$ with $\Lip(f)\le 1$.

Given any $x\in E$, $e\in\R^n$ with $\|e\|\le 1$, and $\eps>0$,
there are arbitrarily large~$k$ such that $\|e_k-e\|<\eps$ and $\eta_k<\eps$.
Inferring from~\ref{p9.2} that
$|f-f_k|\le\sum_{j=k}^\infty \omega_j\le\sum_{j=k}^\infty 2^{j-k}\omega_k \le 2\omega_k$, we use
\ref{p9.4} to find $0<r<\omega_{k-1}(x)\le 2^{-k+1}$
such that
$\sup_{\|y\|\le r}|f(x+y)-f(x)-\spr{e_k}{y}|\le\eta_k r<\eps r$.
Since $\|e_k-e\|<\eps$, we conclude that
$\sup_{\|y\|\le r}|f(x+y)-f(x)-\spr{e}{y}|<2\eps r$.
As $\eps>0$ is arbitrary and $k$ may be arbitrarily large,
\[\adjustlimits\liminf_{r\searrow 0}
\sup_{\|y\|\le r} \frac{|f(x+y)-f(x)-\spr{e}y|}{r} =0,\]
which is the statement \eqref{E2} of the Theorem.
The estimate of upper and lower derivatives follows
by using this with $e=y/\|y\|$ and $e=-y/\|y\|$
to get ${D}^+f(x;y)\ge \|y\|$ and ${D}_+f(x;y)\le -\|y\|$,
respectively.
\end{proof}

\section{Examples}\label{examples}

The argument behind our first
example has already been used many times,
starting with~\cite{p}, to find points of differentiability or
almost differentiability of Lipschitz functions. See, e.g.,
\cite{DH,MD} or \cite[Example 4.7]{AM} for an example showing that
in Corollary~\ref{ACP-AM} the constant $c=c(x)$ cannot be bounded
away from zero.

\begin{exm}\label{e}
There is a compact set $E\subset\R^2$ and a continuous mapping
$x\in E\to e_x\in\{e\in\R^2:\|e\|=1\}$
such that $\NN(E,x)=\{ t e_x: t\in\R\}$ for every $x\in E$
and whenever $f:\nobreak\R^2\to\R$ has $\Lip(f)\le 1$,
there is $x\in E$ such that $\overline{D}f(x,e_x)<1$.
Consequently, in Theorem~\ref{4} we cannot take $\eps=0$.
\end{exm}

\begin{proof}
Let $\phi:\R\to\R$ be a $C^1$ function such that $\phi(-1)=\phi(1)=0$, $\phi'(-1)=\phi'(1)=0$
and $\phi(s)>0$ for $s\ne \pm 1$. Denote $\phi_0=0$ and $\phi_k=\phi/k$, and let
\[E:=\{(s,\phi_k(s)): s\in[-1,1],\,k=0,\pm 1,\pm 2,\dots\}.\]
For $x\in E$, $x=(s,\phi_k(s))$ let $u_x$
and $e_x$ denote the unit vectors in the directions of $(1,\phi_k'(s))$ and $(-\phi_k'(s),1)$, respectively.
Then $e_x\in\NN(E,x)$ and, since $\phi'(-1)=\phi'(1)=0$, the map $x\in E\to e_x$ is continuous.

Suppose $f:\R^2\to\R$ has $\Lip(f)\le 1$.
Consider any
$x\in E\setminus\{(-1,0),(1,0)\}$ such that
$a:=f'(x,u_x)$ exists and $\overline{D}f(x;e_x)=1$. Then
\begin{align*}
\limsup_{t\to 0}\,&\frac{f(x+te_x) - f(x- at u_x)}{t}\\
&\quad=
\limsup_{t\to 0}\frac{f(x+t e_x)-f(x)}{t}
-\lim_{t\to 0} \frac{f(x-{at}u_x)-f(x)}{t}
= 1 + a^2.
\end{align*}
Hence
\[1=\Lip(f)\ge
\limsup_{t\to 0+}\frac{|f(x+te_x) - f(x- at u_x)|}{\|(x+te_x)- (x- at u_x)\|}
=\frac{a^2 +1}{\sqrt{a^2+1}},\]
which gives $f'(x,u_x)=0$.

If $f$ is a function satisfying the conclusion of Theorem~\ref{4} with $\eps=0$,
then for every $k$, $x=(s,\phi_k(s))$
satisfies the above assumptions for a.e.~$s\in(-1,1)$.
Since $f$ is Lipschitz, we infer
that $s\to f(s,\phi_k(s))$ is constant on $[-1,1]$, and hence $f$ is constant on $E$.
Consequently, when $s\in(-1,1)$ and $x=(s,\phi_0(s))$, $e_x=(0,1)$ and so
$\lim_{t\to 0} |(f(x+te_x)-f(x))/t| \le \lim_{t\to 0} \dist(x+te_x,E)/|t|=0$, as
$\dist(x+te_x,E)/|t|\le (k+1)/\bigl(2k(k+1)\bigr)=1/(2k)$
when $|t|$ is between $\phi(x)/(k+1)$ and $\phi(x)/k$.
This contradicts
$\overline{D}f(x;e_x)=1$.
\end{proof}

Our second example is related to Zahorski's description of non-differen\-tiability sets of real-valued functions of a real variable which was already mentioned in the introductory remarks to Corollaries~\ref{CAM} and~\ref{CAMx}. Recall first that
the set of points of non-differentiability of any real-valued function
$f:\R^n\to\R$ is easily seen to be of the type $G_{\delta\sigma}$:
just write it as
\[\bigcup_{\eps>0}\bigcap_{\,e\in\R^n}
\{x: (\exists r>0) (\exists u,v\in B(x,r))
|f(x+u)-f(x+v)-\spr{e}{u-v}|>\eps r\}\]
where $\eps$ runs over positive
rational numbers and $e$ over elements
of a dense countable subset of $\R^n$.
The main argument in Zahorski's~\cite{Z} proof of the converse when $n=1$ (both in the general and in the Lipschitz case) constructs, for a given $G_\delta$ Lebesgue null set $E\subset\R$, a function $f:\R\to\R$ with $\Lip(f)=1$ which is differentiable at every point of $\R\setminus E$ and at every point of $E$ has upper derivative~$1$ and lower derivative~$-1$. (For a more modern treatment of this construction see \cite{FP}.)

While it is not clear what an exact analogy of Zahorski's result for
$n>1$ should be,
one may at least hope that its analogy
holds for uniformly purely unrectifiable sets,
namely that for every uniformly purely unrectifiable
$G_{\delta\sigma}$ set $E\subset \R^n$ there is a Lipschitz function
$f:\R^n\to\R$
such that $E$ is precisely the set of points at which $f$
in non-differentiable in any direction.
We do not know whether this is true or not, but the following
example shows that in this situation the argument
based on uniform discrepancy between upper and lower derivatives
fails in a very strong sense.
Recalling that every uniformly purely unrectifiable set is
contained in a $G_\delta$ uniformly purely unrectifiable set,
the example provides a $G_\delta$ uniformly purely unrectifiable set
such that not only for it, but even for
any bigger $G_\delta$ uniformly purely unrectifiable set
there is no function analogous to the one from Zahorski's
main argument.

\begin{exm}\label{e2}
There is a uniformly purely unrectifiable set $A\subset\R^2$ such that
for any set $E\supset A$ and any $c>0$ there is no Lipschitz function $f:\R^2\to\R$ such that
\begin{enumerate}[label=\rm(\alph*)]
\item\label{e2.1}
${D^+}f(x;y)-{D_+}f(x;y)\ge c\|y\|$ for every $x\in E$ and $y\in\R^2$;
\item\label{e2.2}
$f$ is differentiable at every point $x$ of $\R^2\setminus E$.
\end{enumerate}
\end{exm}

\begin{proof}
By \cite{CPT} there is a universal differentiability set $D\subset\R^2$,
i.e., a set such that
every real-valued Lipschitz function on $\R^2$ has a point
of differentiability belonging to $D$, such that there is
a Lipschitz $h:\R^2\to\R$ for which the set $A$ of points $x\in D$ such that
$h$ is differentiable at $x$, is uniformly purely unrectifiable.
Suppose $E\supset A$ and Lipschitz $f:\R^2\to\R$ satisfy
\ref{e2.1} and \ref{e2.2}. For a small $\eps\in\bigl(0,c/(4\Lip(h))\bigr)$ consider
the function $g:=f+\eps h$. If $x\in E$, \ref{e2.1}
shows that for some $y\in \R$,
${D^+}g(x;y)-{D_+}g(x;y)
\ge (c-2\eps\Lip(h))\|y\|>0$. If $x\in D\setminus E$,
$g$ is the sum of the function $f$ that is differentiable at $x$
and of the function $\eps h$ that is non-differentiable at $x$;
hence it is non-differentiable at $x$. Consequently, the Lipschitz function $g$ has no
point of differentiability at $D$, contradicting that $D$ is a
universal differentiability set.
\end{proof}

\begin{rmk}
The reason for considering a uniform non-differentiability condition such as~\ref{e2.1} was explained in the text before the Example. Notice that, if~\ref{e2.1} were replaced just by non-differentiability of $f$ at every point of~$E$, the statement of the Example would be false: we would use Theorem~\ref{9} to find a function $g$ that is non-differentiable at every point of $A$ and define $E$ as the non-differentiability set of $g$.
On the other hand, it is easy to find uniformly purely unrectifiable sets $E\supset A$ for which there is no Lipschitz function non-differentiable exactly at points of $E$, as such $E$ need not be~$G_{\delta\sigma}$. For the set $A$ from \cite{CPT} which was used in the proof of the Example~\ref{e2} we can take $E=A$ as it is not difficult to see that $A$
is not $G_{\delta\sigma}$, although it is $F_{\sigma\delta}$ since $A$
is the intersection of $D$ with the set of points of differentiability
of~$h$ and $D$ used in \cite{CPT} is $G_\delta$.
It may be of interest to notice that
the fact that $A$ is not a non-differentiability
set of any Lipschitz function $f$ may be seen
directly from the properties of $A$, $D$
and $h$: for any such $f$ the Lipschitz function $f+h$ would be non-differentiable at any $x\in D\setminus A$ as $f$ is differentiable and $h$ is not differentiable at such $x$; and $f+h$ would be non-differentiable at any $x\in A$ as $f$ is not differentiable and $h$ is differentiable at such $x$. As in the proof of the Example~\ref{e2}, this a contradiction as $D$ is a universal differentiability set.
\end{rmk}

Our final example is related to the already pointed out fact that
$E$ is uniformly purely unrectifiable if and only if
there is $0<\eta<1$ such that $w_{e,\eta}(E)=0$ for every
unit vector~$e$. When considering general non-differentiability sets,
a natural
analogy of this statement would say that for any set $E\subset\R^n$
satisfying $w_{e,\eta}(E)=0$ for some unit vector $e$ and some
$0<\eta<1$ there is
a real-valued Lipschitz
function $f$ on $\R^n$ that is non-differentiable
at any point of $E$. We show here that this is false; recall however
that \cite{ACP} shows (directly, not using \cite{CJ})
that for any such set $E$ there is
an $\R^n$-valued Lipschitz
function $f$ on $\R^n$ that is non-differentiable
at any point of $E$.

\begin{exm}\label{e4}
For every $\eta\in(0,1)$ and a unit vector $e\in\R^2$ there is
a universal differentiability set $E\subset\R^2$ such that
$w_{e,\eta}(E)=0$.
\end{exm}

\begin{proof}
Let $L_j$ be an enumeration of all rational lines in $\R^n$,
$J$ the set
of those indexes $j$ for which the direction $u$
of $L_j$ satisfies $|\spr{u}{e}|<\tfrac12\eta$ and
$\eps_{i,j}>0$ such that $\sum_{i,j} \eps_{i,j}<\infty$. It
is easy to see that $E:= \bigcap_j\bigcup_{j\in J}\{x:\dist(x,L_j)<\eps_{i,j}\}$
satisfies $w_{e,\eta}(E)=0$.
The fact that $E$
is a universal differentiability set has been often mentioned, but
does not seem to be documented in the literature.
We therefore explain the argument.

Recall from
\cite{MDo}, \cite{MD} or \cite{p} that, given any Lipschitz $g:\R^n\to\R$,
a procedure leading to a point of differentiability of $g$ may
be described as follows.
One starts with an arbitrary $\delta_0>0$ and $(x_0,e_0)$ from the set
$D$ of pairs $(x,u)$ where $x\in\R^n$, $u$ is a unit vector, and there is
$j=j(x,u)$ such that $x\in L_j$ and $u$ is the direction of~$L$. Recursively,
when $(x_k,e_k)$ has been defined, one
first chooses an arbitrarily small $\delta_{k+1}>0$
and then $(x_{k+1},e_{k+1})\in D$ satisfying
rather delicate conditions about which we need to know only
that $x_{k+1}\in B(x_k,\delta_{k+1})$,
$D g(x_{k+1},e_{k+1})\ge D g(x_{k},e_{k})$
and that they imply that
the sequence $x_k$ converges to a point of differentiability of~$g$.

Returning to our set $E$, given any Lipschitz $f:\R^n\to\R$,
choose $(x_0,e_0)\in D$ so that $|\spr{e_0}{e}|<\tfrac14\eta$ and
let $g(x):=f(x)+c\spr{x}{e_0}$ with $c>64\Lip(f)/\eta^2$; the choice of such large $c$ guarantees that
$D g(x,u) \ge D g(x_0;e_0)$ implies $0\le1-\spr{u}{e_0}\le\frac1c(D f(x;u)-D f(x_0;e_0))\le2\Lip(f)/c\le\frac1{32}\eta^2$, so that $\|u-e_0\|\le\frac14\eta$, hence $|\spr{u}{e}|\le\|u-e_0\|+|\spr{e_0}{e}|<\tfrac12\eta$.
This will imply that in the recursive construction
$j_k:=j(x_k,e_k)\in J$, and so we can choose
$\delta_{k+1}$ such that
$\Bc(x_k,\delta_{k+1})\subset B(L_{j_k},\eps_{k,j_k})\cap B(x_k,\delta_k)$.
Hence the limit of the $x_k$, which is
a differentiability point of $g$ and so of $f$, belongs to $E$.
\end{proof}

\end{document}